\documentclass[pdflatex,sn-mathphys-num]{sn-jnl}


\usepackage{graphicx}%
\usepackage{multirow}%
\usepackage{amsmath,amssymb,amsfonts}%
\usepackage{amsthm}%
\usepackage{mathrsfs}%
\usepackage[title]{appendix}%
\usepackage{xcolor}%
\usepackage{textcomp}%
\usepackage{manyfoot}%
\usepackage{booktabs}%
\usepackage{algorithm}%
\usepackage{algorithmicx}%
\usepackage{algpseudocode}%
\usepackage{listings}%


\newtheorem{theorem}{Theorem}
\newtheorem{proposition}{Proposition}
\newtheorem{definition}{Definition}%
\newtheorem{assumption}{Assumption}%

\raggedbottom

\def\C{{\mathbb C}}
\def\R{{\mathbb R}}
\def\X{{\mathbb X}}
\def\Y{{\mathbb Y}}
\def\H{{\mathcal H}}

\def\Re{{\mathsf{Re}}}
\def\HH{{\bold{ H}}}

\newcommand{\NN}{\mathbb{N}}
\newcommand{\bx}{\mathbf{x}}

\newcommand{\dx}{\Delta {x}}

\newcommand{\by}{\mathbf{y}}

\newcommand{\da}{\Delta a}

\newcommand{\FF}{\mathcal{F}}

\graphicspath{{./Figures/}}

\begin{document}


\title{The bilinear Hessian for large scale optimization}


\author*[1]{\fnm{Marcus} \sur{Carlsson}}\email{marcus.carlsson@math.lu.se}
\equalcont{These authors contributed equally to this work.}

\author[2]{\fnm{Viktor} \sur{Nikitin}}\email{vnikitin@anl.gov}
\equalcont{These authors contributed equally to this work.}

\author[1]{\fnm{Erik} \sur{Troedsson}}\email{erik.troedsson@math.lu.se}
\equalcont{These authors contributed equally to this work.}

\author[3]{\fnm{Herwig} \sur{Wendt}}\email{herwig.wendt@irit.fr}
\equalcont{These authors contributed equally to this work.}

\affil[1]{\orgdiv{Centre for Mathematical Sciences}, \orgname{Lund University}, \orgaddress{\street{Sölvegatan 18}, \city{Lund}, \postcode{223 62}, \country{Sweden}}}

\affil[2]{\orgdiv{Advanced Photon Source}, \orgname{Argonne National Laboratory}, \orgaddress{\street{9700 S Cass Ave}, \city{Lemont}, \postcode{60439}, \state{Illinois}, \country{United States}}}

\affil[3]{\orgdiv{IRIT Laboratory}, \orgname{Universit\'{e} de Toulouse}, \orgaddress{\street{2 rue Camichel}, \city{Toulouse}, \postcode{31071}, \country{France}}}


\abstract{Second order information is useful in many ways  in smooth optimization problems, including for the design of step size rules and descent directions, or the analysis of the local properties of the objective functional. However, the computation and storage of the Hessian matrix using second order partial derivatives is prohibitive in many contexts, and in particular in large scale problems. 
In this work, we propose a new framework for computing and presenting second order information in analytic form. The key novel insight is that the Hessian for a problem can be worked with efficiently by computing its \emph{bilinear form} or \emph{operator form} using Taylor expansions, instead of introducing a basis and then computing the Hessian \textit{matrix}. Our new framework is suited for high-dimensional problems stemming e.g.~from imaging applications, where computation of the Hessian matrix is unfeasible. We also show how this can be used to implement Newton's step rule, Daniel's Conjugate Gradient rule, or Quasi-Newton schemes, without explicit knowledge of the Hessian matrix, and illustrate our findings with a simple numerical experiment.}

\keywords{second-order optimization, Hessian, bilinear Hessian, conjugate gradient}



\maketitle

\section{Introduction}\label{sec1}
Second order information is seldom used  in smooth optimization for large scale applications because the Hessian matrix, traditionally computed by use of second order derivatives, becomes too large to handle. 
For example, in 3D tomography it is not unusual to want to recover ``data-cubes'' of the size $1000^3$, which leads to a Hessian matrix of size $10^9\times 10^9$. But even for more modest 2D imaging applications where the amount of unknowns can be as small as $500^2$, computing and storing the Hessian (involving roughly $500^4/2\approx 3\cdot 10^{10}$ second order partial derivatives) becomes prohibitively slow. As a consequence, first order methods relying on gradients, i.e.~first order derivatives of the objective functional only, or even alternating projection type schemes, are still predominant, see e.g.~\cite{elser2003phase,fienup1982phase} and the references therein.
The usefulness of second order information would have in smooth optimization is, however, undisputed, so that this situation presents a significant deadlock.

To give concrete examples for this, let $\X$ be some (possibly complex valued) linear space (typically matrices or tensors) and let $f:\X\rightarrow \R$ be the function to be minimized via some descent algorithm. Here we allow $f$ to be non-convex in which case the algorithm is only expected to converge to a local minimizer. 
Even if $\X$ is a linear space with complex entries, we can introduce a basis over $\R$ so that each element $x$ can be identified with a vector of basis coefficients $\bold{x}$ in $\R^n$, and we can treat $f$ as a functional on $\R^n$. 
Let $x_k\in\X$ be a current iterate and let $s_k$ be the new search direction. 
Given $s_k$, a common problem is to pick an appropriate step length $\alpha_k$, ideally so that $f(x_k+\alpha_k s_k)$ is near the minimum value along the graph $\alpha\mapsto f(x_k+\alpha s_k)$. One solution to this is to fix the step length, leading to suboptimal convergence, another is to do a line-search using e.g.~bisection methods, which is time consuming because it requires numerous objective functional evaluations, yet another popular method is to use Goldstein-Armijo rules. These rules are not constructed to achieve near-optimal convergence rates, but only ensure that the step length choice is good enough for linear convergence (see e.g.~Section 1.2.3 of \cite{nesterov2013introductory}). Alternatively,  \textit{if} we knew the gradient $\nabla f|_{\bx_k}$ and the Hessian matrix $\HH|_{\bx_k}$ of $f$ at $\bx_k$, we could use the quadratic approximation
\begin{equation}\label{secprx}
  f(\bx_k+\alpha \bold s_k)\approx f(\bx_k)+\alpha \langle \nabla f|_{\bx_k},  \bold{s}_k\rangle+\frac{\alpha^2}{2}\langle \HH|_{x_k} \bold s_k,\bold s_k \rangle 
\end{equation}
to find a close to optimal value of $\alpha_k$ by setting 
$
  \alpha_k= -\frac{\langle \nabla f|_{\bx_k},\bold s_k\rangle}{\langle \HH|_{\bx_k}\bold s_k,\bold s_k \rangle}.
$
This step-size rule is known as Newton's rule, but it is rarely used in practice and, for example it is not even mentioned in the influential classics \cite{boyd2004convex,nesterov2013introductory}.
A second example is provided by the conjugate gradient (CG) method, where one would like to compute the real number $\langle \HH|_{\bx_k}\bold s_k,\bold t_k \rangle$ for two different vectors $\bold s_k$ and $\bold t_k$, which would lead to a principled strategy for computing conjugate directions that is known as Daniel's rule \cite{daniel1967conjugate} (cf. \eqref{eqbetaD} below for details). The use of the Hessian matrix in CG methods is, however, in general discarded upfront with the argument that the cost for computing and storing it would annihilate any computational gain. Instead, the practitioner is left with picking one among many heuristic rules for constructing the conjugate directions at each step, see; e.g., the overview article \cite{hager2006survey} and the recent book \cite{andrei2020nonlinear}.
A third example is given by second order (Quasi-)Newton algorithms, where one is faced with (approximately) solving an equation of the form 
\begin{equation}\label{QNeq}
  -\nabla f|_{\bx_k}=\HH|_{\bx_k} \bold s_k
\end{equation}
for the unknown Newton descent direction $\bold s_k$.

One way out of this dilemma is to make use of the realization that none of the above expressions requires knowledge of the Hessian \emph{matrix}, but instead only computation of one real number $\langle \HH|_{\bx_k} \bold s_k,\bold s_k \rangle$ or $\langle \HH|_{\bx_k} \bold s_k,\bold t_k \rangle$, or one vector $\HH|_{\bx_k} \bold s_k$. 
This was observed in, e.g., \cite{pearlmutter1994fast} where it was used to provide a Hessian-matrix-free algorithm to compute Hessian-vector products as in \eqref{QNeq} in the context of neural networks. The key tool used therein and in subsequent works is Automatic Differentiation (AD), a technique popular in machine learning for computing gradients without needing to derive explicit formulas. {However, AD may significantly slow down computations compared to evaluating direct formulas, a point that we will further elaborate on in Section \ref{secAD}.} 

In this work, we also build on this realization but propose to follow a different, even more computationally efficient, path to extract second order information without having to compute the Hessian matrix. Our approach does not require the use of AD-specific toolboxes (such as PyTorch) but instead makes it possible to easily identify the \emph{analytic expressions} for the above quantities, so that the corresponding routines can be implemented directly.
Specifically, in this work, we present a method for computing the \emph{bilinear Hessian}, which for each fixed $x\in\X$ is a unique symmetric bilinear form $\H|_{x}$ on $\X$ such that
\begin{equation}\label{hes}
  {\H|_{x}(s,t)}=\langle \HH|_{\bx}\bold s,\bold t \rangle
\end{equation}
holds for all $s,t\in\X$ (where $\bold s,~\bold t$ denotes the corresponding coefficients in some given basis), and can be efficiently computed for large scale applications as those mentioned above. 
Moreover, we use this representation to extract a set of efficiently implementable rules that combined form a self-adjoint linear operator acting on $\X$, which we call the \emph{Hessian operator} $H|_{x}$, with the property that $H|_{x}(s)$ is the element in $\X$ corresponding to the coefficients $\HH|_{\bx}\bold s$. A key point of our approach is to \emph{avoid introducing a basis} (or column-stacking type operations), thereby identifying the linear space $\X$ with $\R^n$, and to instead work with Taylor expansions directly on $\X$: 
Indeed, upon identification with $\R^n$, one would naturally end up with the use of the chain rule, and Wirtinger type derivatives in the complex case. This would not only lead to a significant notational challenge for the second order derivatives even for simple problems, but also to complicated and error prone expressions 
with summing operations that quickly become prohibitively slow. With our method, we easily arrive at simple expressions that are naturally expressed in the operators that are inherent in the construction of the functional $f$. 
We illustrate this with a simple example throughout the text. For more advanced examples we refer to \cite{troedsson2024joint} (joint eigendecomposition of matrices) and \cite{carlsson2025efficient} (ptychography).

The necessary preliminaries for our method are gathered in Section \ref{sec:taylor}, including details on how to deal with the situation when the underlying space $\X$ is complex. Our proposed approach is described and studied in general in Section \ref{secghb}, which culminates with a schematic description of the key steps needed for computing the sought first and second order information without ever ``leaving'' the space $\X$. 
In Section \ref{sec_lg}, we extend this to the definition of efficient Newton step size rules, Conjugate Gradient methods, and Quasi-Newton schemes. Throughout, we make use of a simple pedagogic example, for which we report numerical results and comparisons in Section \ref{secnum}. 
In Section \ref{secloc}, we provide theorems proving convergence to local minima of Gradient Descent (GD) with the concrete rules for step-size suggested in this paper. Section \ref{sec13} concludes on this work and points to future research.

\section{Taylor's formula in inner product spaces}
\label{sec:taylor}

In most applications of inverse problems to imaging and signal processing, the sought object is naturally represented as a multiarray $(x_\gamma)_{\gamma\in \Gamma}$, where $\Gamma$ can be any subset of $\mathbb{Z}^d$ for some $d\in \mathbb{N}$ and each $x_j$ denotes either a real or a complex number. Denote the space of all such multi-arrays by $\X$ and suppose we are interested in finding a local minimum of a functional $f:\X\rightarrow \R$. Upon sorting the multiarray into a vector, one can identify $\X$ with either $\R^n$ or $\C^n$ where $n$ equals the cardinality of $\Gamma$, and then, at least in the real case, one can think of $f$ as a functional acting on $\R^n$. This allows us to employ multivariable calculus, compute gradients and, where attainable Hessian matrices, which can be used in optimization routines such as Gradient Descent, Conjugate Gradient descent or second order Newton-type methods.
However, in modern applications, $n$ is often a huge number and the above scheme becomes untractable or even impossible to execute. In this section and the next we discuss how to avoid such issues using tools from basic linear algebra, and also how to efficiently deal with the complex case. In particular, we show how to avoid introducing a basis and do all operations in $\X$. The material in this chapter is not new, for example it can be found in the classic \cite{cartan1971differential} which treats the more general situation of a Banach space.

\subsection{Inner product spaces}

First note that $\X$, as introduced above, naturally becomes an \textit{inner product space} endowed with the inner product  $$\langle x,y\rangle=\sum_{\gamma\in \Gamma} x_\gamma\overline {y_\gamma},$$ where the bar denotes the complex conjugate. An inner product space is simply any linear space (over $\R$ or $\C$) endowed with an inner product (also commonly referred to as a \textit{scalar product}). 
Moreover, in the complex case, we can always consider such a linear space (over $\C$) as a linear space over $\R$ with the corresponding real inner product 
\begin{equation}\label{realSP}
\langle x, y\rangle_{\R}:= \mathsf{Re}\langle x,y\rangle.
\end{equation} For example, $\C^n$ with the usual inner product, is also a linear space over $\R$ (of dimension $2n$) and the above formula gives $$\langle u+iv,x+iy\rangle_{\R}=\sum_j (u_j x_j+v_j y_j)$$ which is precisely the standard inner product on $\R^{2n}$. 
\begin{assumption}\label{def1}
  In the entire paper, $\X$ will denote a fixed finite dimensional inner product space, either over $\R$ or $\C$. In the latter case, we will still consider it a space over $\R$ with the scalar product \eqref{realSP}.
\end{assumption} 
We remark that the concept of a linear space is more general than the multi-array type spaced alluded to above, for example $\X$ can also be the space of real or complex Hermitian matrices.

By basic linear algebra, every finite dimensional inner product space 
 with dimension $n$ has
a basis with $n$ linearly independent vectors $\{e_1,\ldots,e_n\}$, which can be taken orthonormal, i.e.~such that $\|e_j\|=1$ for each $j$ and $\langle e_j,e_k\rangle_{\R}=0$ whenever $j\neq k$. Here we use the subindex ${\R}$ in the scalar product to underscore that even complex spaces are considered as real inner product spaces. Thus, for example $\C^2$ can be seen as a real inner product space {with dimension $n=4$ and} orthonormal basis e.g.~$(1,0),~(i,0),~(0,1),~(0,i)$.

\subsection{Smooth functions between inner product spaces}

Let $\X$ be as in Assumption \ref{def1} and let $\{e_1,\ldots,e_n\}$ be an orthonormal basis. We can then define the isometric isomorphism $\iota_\X:\R^n\rightarrow \X$ by \begin{equation}\label{iota}
                                        \iota_\X\big(\bx\big)=\sum_{j=1}^n \bx_je_j,
\end{equation} 
where $\bx=(\bx_j)_{j=1}^n$, and we can define smoothness of functions $f:\X\rightarrow \R$ in terms of their pullbacks $\iota_\X^{-1}(f)$. Clearly, $\iota_\X$ depends on the choice of orthonormal basis, but given two such maps $\iota_{\X,1}$ and $\iota_{\X,2}$ we have that $\iota_{\X,1}^{-1}\iota_{\X,2}$ is an isometry on $\R^n$ and hence given by an orthogonal matrix $U$. Since $\iota_{\X,2}=\iota_{\X,1}\iota_{\X,1}^{-1}\iota_{\X,2}=\iota_{\X,1} U$, we see that the above definition is basis independent.

In many practical applications, the objective functional $f: \X\rightarrow \R$ is a composition of several non-linear functions, where only the last one has $\R$ as codomain. It is therefore convenient to develop a more general theory for functions $\FF:\X\rightarrow \Y$ between different inner product spaces $\X$ and $\Y$. Following standard approaches in e.g.~differential geometry \cite{lang1995differential}, we shall use the following definition.
\begin{definition}\label{def2}
Let $\X$ and $\Y$ be inner product spaces as in Assumption \ref{def1}, with dimensions $n$ and $m$ respectively, and let $\FF:\X\rightarrow \Y$ be a function. We then say that $f$ is $K$-smooth, or shortly that $\FF\in C^K(\X,\Y)$, if $\iota_\Y^{-1} \FF \iota_\X$ is $K$ times continuously differentiable as a function from $\R^n$ to $\R^m$ in the classical sense of multivariable calculus.  
\end{definition} 
As before we see that the definition is basis independent, but again, the point of this section is to skip the bases and work directly in $\X$ and $\Y$. The below definition introduces linear operators, which generalizes the idea of a matrix to a basis free definition. Figure \ref{fig:illustration1} explains the idea.
\begin{wrapfigure}{r}{0.3\textwidth}
\centering
\includegraphics[width=1\linewidth]{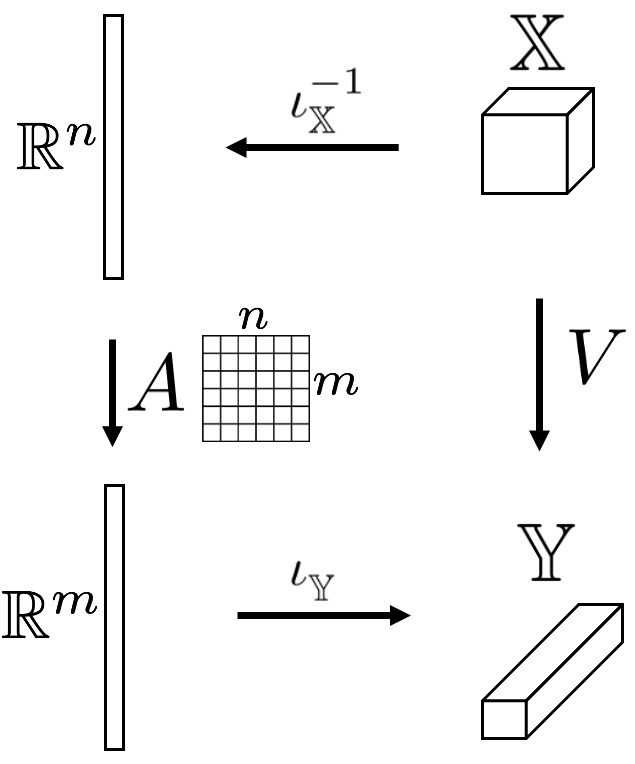}
\caption{\label{fig:illustration1} Given a fixed basis in $\X$ and $\Y$, all linear operators can be represented as matrices, and vice versa. For practical implementation, it is desirable to avoid matrix representations of linear operators (e.g., using FFT to evaluate Fourier transforms).}
\end{wrapfigure}
\begin{definition}\label{def3}
Let $\X$ and $\Y$ be inner product spaces as in Assumption \ref{def1}. A function $V:\X\rightarrow \Y$ is called linear if $V(ax+by)=aV(x)+bV(y)$ holds for all $x,y\in\X$ and $a,b\in\R$. Similarly, a function $W:\X\times \X \rightarrow\Y$ is called bilinear if it is linear in its first and second argument individually, keeping the other one fixed. Finally, $W$ is said to be symmetric if $W(x,y)=W(y,x)$ for all $x,y\in\X$.
\end{definition}
If $\Y=\R$ we usually call $V$ a linear \textit{functional} and denote it by $f$. Otherwise we usually refer to $V$ and $W$ as linear and bilinear \textit{operators}, respectively. We remark that $V$ is linear if and only if $\iota_\Y^{-1} V\iota_\X(\bx)$ can be represented as a matrix-vector multiplication $\bold A\bx$, as illustrated in Figure \ref{fig:illustration1}. 
Also note that if $\X$ and $\Y$ are linear spaces over $\C$, and $V$ is linear over $\C$, then it is automatically linear over $\R$ as well. The same goes for complex bilinear and sesqui-linear forms and operators.

We are now ready to state the main result of this section.
Since we do not strive for maximal generality but rather 
to expose in simple terms our approach
for avoiding the use of bases and partial derivatives, we satisfy with stating the theorem for functions that are $C^2$. 

\begin{theorem}\label{t1}
  Let $\FF:\X\rightarrow \Y$ be a $C^2$-function, where $\X$ and $\Y$ are spaces as in Assumption \ref{def1}. Then, for each fixed $x\in\X$, there exists a unique linear operator $d\FF|_x:\X\rightarrow \Y$ and a unique bilinear symmetric operator $d^2\FF|_x:\X\oplus\X\rightarrow \Y$ such that \begin{equation}\label{dfgs}
  \FF(x+\Delta x)=\FF(x)+d\FF|_x(\Delta x)+\frac{1}{2}d^2\FF|_x(\Delta x,\Delta x)+o(\|\Delta x\|^2),\quad \Delta x\in\X
                                                                                       \end{equation} 
where $o$ denotes ``little ordo''.                                                                                        
\end{theorem}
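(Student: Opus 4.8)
The plan is to reduce the statement to the classical second-order Taylor theorem on $\R^n$ and $\R^m$ by means of the isometric isomorphisms $\iota_\X$ and $\iota_\Y$ from \eqref{iota}, and then to establish uniqueness by a purely intrinsic scaling argument that needs no basis. First I would fix orthonormal bases in $\X$ and $\Y$, set $g:=\iota_\Y^{-1}\FF\iota_\X:\R^n\rightarrow\R^m$, and observe that $g\in C^2(\R^n,\R^m)$ precisely by Definition \ref{def2}. Writing $\bx:=\iota_\X^{-1}(x)$, everything then follows from transporting the classical differential objects of $g$ at $\bx$ back to $\X$ and $\Y$.

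For \emph{existence}, I would apply the standard second-order Taylor expansion to each component of $g$ at $\bx$, giving the Jacobian $Dg|_{\bx}$ (a linear map $\R^n\rightarrow\R^m$) and, assembling the componentwise Hessians, a symmetric $\R^m$-valued bilinear form $Q|_{\bx}(\bdx,\bdx)=\sum_{i,j}\partial_i\partial_j g|_{\bx}\,\bdx_i\bdx_j$. Symmetry of $Q|_{\bx}$ in its two arguments is exactly Clairaut's theorem on the equality of mixed partials, available because $g$ is $C^2$. I would then \emph{define} $d\FF|_x:=\iota_\Y\circ Dg|_{\bx}\circ\iota_\X^{-1}$ and $d^2\FF|_x(u,v):=\iota_\Y\big(\sum_{i,j}\partial_i\partial_j g|_{\bx}(\iota_\X^{-1}u)_i(\iota_\X^{-1}v)_j\big)$; these are respectively linear and symmetric bilinear in the sense of Definition \ref{def3}, since $\iota_\X,\iota_\Y$ are linear over $\R$. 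Applying $\iota_\Y$ to the classical expansion of $g$ and using that $\iota_\Y$ is an isometry (so it sends an $\R^m$-valued remainder of size $o(\|\bdx\|^2)$ to one of the same norm), together with the isometry $\|\Delta x\|=\|\bdx\|$, yields exactly \eqref{dfgs}.

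For \emph{uniqueness}, suppose a second pair $(L,B)$, with $L$ linear and $B$ symmetric bilinear, also satisfies \eqref{dfgs}. Subtracting the two expansions and writing $\tilde L:=L-d\FF|_x$ and $\tilde B:=B-d^2\FF|_x$, I obtain $\tilde L(\Delta x)+\tfrac12\tilde B(\Delta x,\Delta x)=o(\|\Delta x\|^2)$. Substituting $\Delta x=t v$ for a fixed $v\in\X$ and a real scalar $t\downarrow 0$, dividing by $t$ and letting $t\to 0$ forces $\tilde L(v)=0$, and then dividing the remaining identity by $t^2$ forces $\tilde B(v,v)=0$. Since $v$ is arbitrary, $\tilde L=0$, and since $\tilde B$ is symmetric, the polarization identity $\tilde B(u,v)=\tfrac12\big(\tilde B(u+v,u+v)-\tilde B(u,u)-\tilde B(v,v)\big)$ gives $\tilde B=0$. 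As a bonus, uniqueness immediately yields the basis-independence announced after Definition \ref{def2}: any choice of orthonormal bases produces objects satisfying \eqref{dfgs}, hence the same $d\FF|_x$ and $d^2\FF|_x$.

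The routine pieces are the classical Taylor theorem and the linear-algebra bookkeeping; the points requiring genuine care are the symmetry of $d^2\FF|_x$, which is where the full $C^2$ hypothesis (rather than mere twice-differentiability) enters through Clairaut's theorem, and the verification that the remainder really transports as $o(\|\Delta x\|^2)$ rather than merely $O(\|\Delta x\|^2)$. I expect the latter to be the only delicate step, since it is what ties the coordinate-free norm on $\X$ to the Euclidean norm on $\R^n$ and thereby justifies stating the result without reference to any basis.
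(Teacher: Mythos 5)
Your proof is correct and takes essentially the same route as the paper's: both reduce the statement to the classical multivariable Taylor theorem componentwise in $\Y$ via the isometries $\iota_\X$, $\iota_\Y$, and transport the resulting gradients/Jacobian and componentwise Hessians back to the inner product spaces. The only difference is that you spell out the uniqueness step explicitly (substituting $\Delta x = tv$, letting $t\downarrow 0$, then polarization), whereas the paper defers it to the uniqueness of the classical Taylor expansion; this is a welcome elaboration but not a substantively different argument.
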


\begin{proof}
Let $\{e_k\}^m_{k=1}$ be a basis in $\Y$ and consider the function $$f_k(\bx)=\langle \FF(\iota_{\X}(\bx)),e_k\rangle_\R.$$ By the multivariable version of Taylors theorem, at each fixed $\bx\in\R^n$, there exists a gradient $\nabla f_k|_\bx$ and a symmetric Hessian matrix $\HH_{k}|_{\bx}$ such that 
$$  f_k(\bx+\Delta \bx)=f_k(\bx)+\langle \nabla f_k|_\bx,\Delta \bx\rangle_\R+\frac{1}{2} \langle \HH_{k}|_{\bx}\Delta \bx,\Delta \bx\rangle_{\R}+o(\|\Delta \bx\|^2),\quad \Delta \bx\in\R^n.$$
The sought real linear functional, for $x=\iota_{\X}(\bx)$, is now given by $$d\FF|_x(y)=\sum_{k=1}^m \langle \nabla f_k|_\bx,\iota_{\X}^{-1}(y) \rangle_\R e_k$$
and similarly, the real bilinear Hessian is given by 
$$d^2\FF|_x(y,z)=\sum_{k=1}^m \langle \HH_{k}|_{\bx}\iota_{\X}^{-1}(y),\iota_{\X}^{-1}(z)\rangle_{\R} e_k.$$
The uniqueness is easily derived from the uniqueness of the gradient and Hessian in the multivariable Taylor's formula.
\end{proof}

It is the uniqueness statement of the above theorem which will enable us to work with first and second order information without performing any explicit differentiation in some basis. The reason for this is that most objective functionals we wish to optimize are composed of a number of basic $C^\infty$-functions (and linear operators) which each have a well-known Taylor expansion that can be used to derive $V=d\FF|_x$ and $W=d^2\FF|_x$ directly, see e.g.~\cite{carlsson2025efficient,troedsson2024joint} as well as Section \ref{sec:poisson1} for examples. However, while it is often easy to find explicit formulas for $d\FF|_x$ and $d^2\FF|_x$, these are by themselves of limited use. What is really needed is versions of the gradient and Hessian that work directly on $\X$ and $\Y$, \emph{thus avoiding any bases}. In the next section we define these objects and demonstrate how to use $d\FF|_x$ and $d^2\FF|_x$ to find them. To make things a bit more concrete we first consider an example.

\subsection{An example: Poisson noise}
\label{sec:poisson1}

Given $N\in\mathbb{N}$, let $\R^{N\times N}$ denote $\R^N\otimes \R^N$, i.e.~$N\times N$ matrices with real entries, and consider a linear system $T:\R^{N\times N}\rightarrow \R^{N\times N}$ which maps the non-negative orthant $\R^{N\times N}_+$ into itself (representing, e.g., convolution with a point spread function). Let us furthermore suppose that the target space $\R^{N\times N}$ represents a detector, and that the events impinging on the system (detector and operator $T$) in a given time interval can be modeled as a Poisson process with rate parameters $x\in \R^{N\times N}$. The measurements $c\in\NN^{N\times N}$ on the detector (e.g., photon counts) are then modeled as a Poisson process with rate parameter $y_\gamma=\big(T(x)\big)_\gamma$, i.e., a probability mass function $P(c_\gamma)\propto y_\gamma^{c_\gamma}e^{-y_\gamma}$, where $\gamma=(j_1,j_2)\in \Gamma$ with $\Gamma=\{1,\ldots,N\}^2$. 

Suppose that we are given $c$ and we want to estimate $x\in \R^{N\times N}$. The likelihood and negative log-likelihood for $x$ is thus 
\begin{align*}
LH(x)&\propto \prod_{\gamma\in \Gamma}\Big(\big(T(x)\big)_\gamma\Big)^{c_\gamma}e^{-\big(T(x)\big)_\gamma},\\
-\log(LH(x))&=\sum_{\gamma\in \Gamma}  \big(T(x)\big)_\gamma - c_\gamma\log\Big(\big(T(x)\big)_\gamma \Big).
\end{align*}
One typically wants to find the maximum likelihood estimator $\hat x_{ML}$ that minimizes the negative log-likelihood,
$f(x):= \sum_{\gamma\in \Gamma}  \big(T(x)\big)_\gamma - c_\gamma\log\Big(\big(T(x)\big)_\gamma \Big),
$
i.e.~
\begin{equation*}
\hat x_{ML}=\arg\min_{ x}f( x) = \arg\min_{ x} \sum_{\gamma\in \Gamma}  \big(T(x)\big)_\gamma - c_\gamma\log\Big(\big(T(x)\big)_\gamma \Big).
\end{equation*}

To this end, we need the gradient, and preferably also Hessian in some manageable form, to be able to implement Gradient and Conjugate Gradient Descent schemes, or even Quasi-Newton ones, even if the detector is large (modern detectors can have $N\approx 10^4$ so we have to handle around $10^8$ variables). Below, we content with computing $df$ and $d^2f$. 
To begin, we consider 
\begin{equation*}
f(x+\Delta x)=\sum_{\gamma\in \Gamma}  \big(T(x)\big)_\gamma + \big(T(\dx)\big)_\gamma - c_\gamma\log\Big(\big(T(x)\big)_\gamma + \big(T(\dx)\big)_\gamma \Big).
\end{equation*}
Now, by Taylors formula we have 
\begin{equation*}
\log(a+\da) = \log(a) + \log(1+\da/a) = \log(a) +\frac{\da}{a} - \frac{1}{2} \left(\frac{\da}{a}\right)^2+\mathcal{O}(\da^3),\quad a>0,
\end{equation*}
where $\mathcal{O}$ denotes ``big ordo'', thus $f(x+\dx)$ can be expanded as 
\begin{align*}
&\sum_{\gamma\in \Gamma}  \big(T(x)\big)_\gamma  - c_\gamma\log\Big(\big(T(x)\big)_j \Big) + \big(T(\dx)\big)_\gamma - c_\gamma\left(\frac{\big(T(\dx)\big)_\gamma}{\big(T(x)\big)_\gamma} - \frac{1}{2} \frac{\big(T(\dx)\big)_\gamma^2}{\big(T(x)\big)_\gamma^2}\right)  +\\
& \mathcal{O}(||\dx||^3)=f(x) + \sum_{\gamma\in \Gamma}  \big(T(\dx)\big)_\gamma  - c_\gamma \frac{\big(T(\dx)\big)_\gamma}{\big(T(x)\big)_\gamma} + \frac{c_\gamma}{2} \frac{\big(T(\dx)\big)_\gamma^2}{\big(T(x)\big)_\gamma^2}  + \mathcal{O}(||\dx||^3).
\end{align*}
From this expression and the uniqueness part of Theorem \ref{t1}, it is clear that \begin{equation}\label{h4}
                                         df|_x(\Delta x)=\sum_{\gamma\in \Gamma}  \big(T(\dx)\big)_\gamma  - c_\gamma \frac{\big(T(\dx)\big)_\gamma}{\big(T(x)\big)_\gamma}
                                       \end{equation}
and \begin{equation}\label{h6}
     d^2f|_x(\Delta x,\Delta x)= \sum_{\gamma \in \Gamma}{c_\gamma} \frac{\big(T(\dx)\big)_\gamma^2}{\big(T(x)\big)_\gamma^2}.
    \end{equation}

At this point, we have identified simple expressions for $df|_x$ and $d^2f|_x$.
But it is still unclear how to obtain a gradient from \eqref{h4} and a ``matrix free'' Hessian from \eqref{h6}. In the next Section, we will see how this can be done.

\section{The gradient, the bilinear Hessian and the Hessian operator}\label{secghb}

We now provide a basis free definition of the gradient. Again, this idea is not new and can be found e.g.~in Chapter 2.6 in \cite{bauschke2017convex}, where it is developed in a general Hilbert space setting.

\begin{definition}\label{def4}
Let $\X$ be a space as in Assumption \ref{def1}. Given a functional $f:\X\rightarrow \R$, the gradient of $f$ at $x$ is the unique element $\nabla f|_x$ in $\X$ such that $df|_x(\dx)=\langle \nabla f|_x,\dx\rangle_{\R}$.
\end{definition}

The existence and uniqueness of $\nabla f|_x$ follows from the finite dimensional version of the Riesz Representation theorem. It is easy to see that \begin{equation}\label{hu8}
\nabla (f\circ \iota_\X)|_\bx=\iota_\X^{-1}(\nabla f|_x)
\end{equation}
where $x=\iota_\X(\bx)$ and $\circ$ denotes composition of functions, where the first $\nabla$ refers to the standard gradient for functions on $\R^n$, and the latter for the gradient in the sense of Definition \ref{def4}. Thus, the ``new'' definition is just the old one framed in such a way that we can avoid introducing a basis and work directly in $\X$.
We also remark that, in case $\X=\C^n$, $\nabla f|_x$ becomes a vector in $\C^n$ which is equal to the Wirtinger derivatives $\partial f/\partial \bold{z}$, which were popularized in optimization by the paper \cite{candes2015phase},
in which the new term ``Wirtinger flow'' was used for what is in fact simply gradient descent in $\C^n$.
Thus, the above definition can be viewed as an extension of this framework to multidimensional objects. Moreover, by relying on Taylor expansions rather than Wirtinger derivatives, arguably the derivation of the corresponding expressions becomes simpler.

\smallskip\indent{\sc Example (continued).\quad}
To illustrate, we continue the previous example and note that 
the linear term $df|_x(\dx)= \sum_{\gamma\in \Gamma}  \big(T(\dx)\big)_\gamma  - c_\gamma \frac{\big(T(\dx)\big)_\gamma}{\big(T(x)\big)_\gamma}  $ can be written
\begin{equation}
\left\langle  \mathbf{1} - \left(\frac{c_\gamma}{\big(T(x)\big)_\gamma}  \right)_{\gamma\in \Gamma}, T(\dx) \right\rangle_\R 
=
\left\langle T^*\left(  \mathbf{1} - \left(\frac{c_\gamma}{\big(T(x)\big)_\gamma}  \right)_{\gamma\in \Gamma}\right), \dx \right\rangle,
\end{equation}
where $\mathbf{1}$ denotes the identity element of $\R^{N\times N}$ and $\Gamma=\{1,\ldots, N\}^2$ as before, and the gradient is therefore
\begin{equation}
\nabla f|_x = T^*\left(  \mathbf{1} - \left(\frac{c_\gamma}{\big(T(x)\big)_\gamma}  \right)_{\gamma\in \Gamma}\right)
\end{equation}
which is an element of $\X$, where $T^*$ denotes the operator adjoint.\hfill$\diamond$
\smallskip

By definition, the operator adjoint is the linear operator such that 
\begin{equation}\label{u7}
\langle T(x),y\rangle =\langle x,T^*(y)\rangle
\end{equation}
holds for all $x$ and $y$. If $\X$ is $\R^n$ this is just the matrix transpose in our example, and if $\X=\C^n$ it is the Hermitian transpose. But in large scale applications it typically is the case that $T$ is implemented as a composition of operations acting on $\X$ directly, which is much faster to evaluate than via some matrix representation. In such cases it is crucial to derive an expression for $T^*$ of a similar form, so therefore we advice against thinking of $T^*$ as some sort of matrix transpose. 
Also, it is important to note that if $\X$ is a linear space over $\C$ then the above is true even if $T^*$ is the adjoint in $\X$ treated as a complex space, since by \eqref{realSP} we then have 
$$\langle T(x),y\rangle_\R =\Re\langle T(x),y\rangle =\Re \langle x,T^*(y)\rangle=\langle x,T^*(y)\rangle_\R.$$ 
Thus, in particular, if $T$ equals the discrete Fourier transform implemented via FFT, then $T^*$ becomes the IFFT. 


\subsection{The Bilinear Hessian and polarization}

We are now ready to formally introduce the Hessian as a bilinear form, which we did informally already in the introduction \eqref{hes}.
\begin{definition}\label{def5}
Given space $\X$ as in Assumption \ref{def1} and a functional $f:\X\rightarrow \R$, the symmetric bilinear form $d^2f|_x$ will be called ``the bilinear Hessian'' of $f$ at the point $x$, usually denoted $\H|_{x}$. 
\end{definition}
While the definition basically just changes the notation, it remains to be explained how to actually find it only knowing $d^2f|_x$ on the diagonal, which is what we get by following the procedure described in Section \ref{sec:taylor}.

\smallskip\indent{\sc Example (continued).\quad} We illustrate this with our example.
On the diagonal, we saw that the bilinear Hessian was given by \eqref{h6},
i.e. $$\H|_x(u,u)=  \sum_{\gamma\in\Gamma}\frac{c_\gamma}{\big(T(x)\big)_\gamma^2} \big(T(u)\big)_\gamma^2.$$
From this expression one can "guess" that the bilinear Hessian is
\begin{equation}\label{h7}
\H|_x(u,v)=  \sum_{\gamma\in \Gamma}\frac{c_\gamma}{2\big(T(x)\big)_\gamma^2} \big(T(u)\big)_\gamma \big(T(v)\big)_\gamma.
\end{equation}
Indeed, the above form is bilinear, symmetric and agrees with \eqref{h6} whenever $u=v$, and therefore by the uniqueness part of Theorem \ref{t1} we have found the correct expression. $\diamond$

\smallskip

In most applications, it is possible to copy the above idea, i.e.~to first guess and then refer to uniqueness. However, if this fails there is a mechanical way of deriving the expression, called ``polarization'', i.e.~the fact that by bilinearity we have that $\H|_x$ has to satisfy 
\begin{equation}\label{polarization}
\H|_x(u,v) = \frac{1}{2}\left( \H|_x(u+v,u+v) - \H|_x(u,u) - \H|_x(v,v)\right).
\end{equation}
It is easily verified that one can arrive at \eqref{h7} by plugging \eqref{h6} into the above formula.

\subsection{The Hessian as an operator}
\label{sec:HessOp}

It is a standard fact from linear algebra on $\R^n$ that all bilinear forms $\H$ on $\R^n$ can be written as $\H(\bold y,\bold z)=\langle \HH \bold y,\bold z\rangle$ for some matrix $\HH$, and that the form is symmetric if and only if the matrix is, i.e.~$\HH^t=\HH$ where $t$ denotes the matrix transpose.
In this section we generalize this observation to inner product spaces. Recall that real linear operators (which is the generalization of matrices to inner product spaces) were introduced in Definition \ref{def3}, and that operator adjoints were introduced in \eqref{u7} (which is the generalization of the matrix transpose). A real linear operator $H$ is called symmetric if $H^*=H$.
\begin{theorem}\label{t2}
Given a bilinear form $\H$ on some inner product space $\X$, there exists a unique linear operator $H:\X\rightarrow \X$ such that $\H(y,z)=\langle H(y),z\rangle$. The operator is symmetric if and only if the bilinear form is symmetric. Finally, setting $h(x)=\frac{1}{2}\H(x,x)$, we have $\nabla h|_{x}=H(x)$.
\end{theorem}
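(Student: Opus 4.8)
The plan is to dispatch the three assertions in order, letting the finite dimensional Riesz Representation theorem---already used to define the gradient in Definition~\ref{def4}---produce the operator $H$, and then reading off symmetry and the gradient formula from the single defining identity $\H(y,z)=\langle H(y),z\rangle$. First I would build $H$ pointwise: for a fixed $y\in\X$ the assignment $z\mapsto\H(y,z)$ is, by bilinearity of $\H$, a real linear functional on $\X$, so Riesz supplies a unique vector that I call $H(y)$ with $\H(y,z)=\langle H(y),z\rangle$ for all $z$. Linearity of the resulting map $H:\X\to\X$ then follows from the uniqueness in Riesz, since $\langle H(ay_1+by_2),z\rangle=\H(ay_1+by_2,z)=a\langle H(y_1),z\rangle+b\langle H(y_2),z\rangle=\langle aH(y_1)+bH(y_2),z\rangle$ for every $z$, and nondegeneracy of the inner product forces $H(ay_1+by_2)=aH(y_1)+bH(y_2)$; the same nondegeneracy argument gives uniqueness of $H$, as any competing representing operator $\tilde H$ would satisfy $\langle H(y)-\tilde H(y),z\rangle=0$ for all $z$, whence $H=\tilde H$.

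For the symmetry equivalence I would unwind the definition of the adjoint in \eqref{u7}: the operator $H$ is symmetric precisely when $\langle H(y),z\rangle=\langle y,H(z)\rangle$ for all $y,z$. The representing identity turns the left side into $\H(y,z)$ and the right side into $\langle H(z),y\rangle=\H(z,y)$ (using symmetry of the real inner product), so $H^*=H$ holds for all $y,z$ if and only if $\H(y,z)=\H(z,y)$ for all $y,z$, that is, exactly when the bilinear form is symmetric.

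For the gradient statement I would expand $h$ by bilinearity, $h(x+\Delta x)=h(x)+\tfrac12\big(\H(x,\Delta x)+\H(\Delta x,x)\big)+\tfrac12\H(\Delta x,\Delta x)$, and observe that the last term is $o(\|\Delta x\|)$ since every bilinear form on a finite dimensional space is bounded, giving $|\H(\Delta x,\Delta x)|\le C\|\Delta x\|^2$. Rewriting $\H(x,\Delta x)=\langle H(x),\Delta x\rangle$ and $\H(\Delta x,x)=\langle H(\Delta x),x\rangle=\langle H^*(x),\Delta x\rangle$ exhibits the first order term as $\tfrac12\langle(H+H^*)(x),\Delta x\rangle$, so Definition~\ref{def4} identifies $\nabla h|_x=\tfrac12(H+H^*)(x)$; the symmetry established above then collapses this to $\nabla h|_x=H(x)$.

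I expect no step to be a serious obstacle. The two points that need care are the appeal to uniqueness in Riesz, which is what upgrades the pointwise assignment $y\mapsto H(y)$ into a genuine and unique linear operator, and, in the last part, the observation that symmetry of $\H$ is precisely what fuses the two cross terms into $2\langle H(x),\Delta x\rangle$ so that the prefactor $\tfrac12$ cancels. The boundedness that makes the quadratic term little-ordo is automatic in finite dimensions and needs only a one-line justification.
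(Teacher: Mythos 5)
Your proof is correct, but it follows a genuinely different route from the paper's. The paper constructs $H$ by introducing an orthonormal basis and the isometry $\iota_\X$ of \eqref{iota}, invoking the classical fact that every bilinear form on $\R^n$ is $\langle \bold A\by,\bold z\rangle$ for a matrix $\bold A$, and then transporting $\bold A$ back to $\X$ via $H(y)=\iota_\X\big(\bold A\iota_\X^{-1}(y)\big)$, using $(\iota_\X^{-1})^*=\iota_\X$; the symmetry equivalence and the gradient formula are explicitly left to the reader. You instead stay entirely basis-free: Riesz representation applied to $z\mapsto\H(y,z)$ produces $H(y)$ pointwise, nondegeneracy of the inner product gives linearity and uniqueness, the symmetry equivalence falls out of the adjoint identity \eqref{u7}, and the gradient claim follows from the exact expansion $h(x+\Delta x)=h(x)+\tfrac12\langle (H+H^*)(x),\Delta x\rangle+\tfrac12\H(\Delta x,\Delta x)$ together with boundedness of bilinear forms in finite dimensions. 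Your route is arguably more in the spirit of the paper itself (whose stated goal is to avoid bases), it is complete where the paper's proof defers details, and it generalizes verbatim to infinite-dimensional Hilbert spaces with bounded forms, which the matrix reduction does not. It also surfaces a point the paper glosses over: for a non-symmetric form one only gets $\nabla h|_x=\tfrac12(H+H^*)(x)$, so the final claim $\nabla h|_x=H(x)$ genuinely requires symmetry of $\H$ — your explicit use of the symmetry hypothesis at that step is a welcome precision, though you should state clearly that this last assertion is being proved under the symmetry assumption rather than for arbitrary bilinear forms, since the theorem's phrasing does not restrict it. What the paper's approach buys in exchange is brevity and a direct appeal to standard linear algebra facts that need no further justification.
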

\begin{proof}
Introduce a basis in $\X$ and an isometric isomorphism $\iota_\X$ as in \eqref{iota}. By considering the bilinear form $\H(\iota_\X(\by),\iota_\X(\bold z))$ and the facts about such forms on $\R^n$ alluded to before the theorem, there exists a matrix $\bold A\in\R^{n\times n}$ such that $$\H(\iota_\X(\by),\iota_\X(\bold z))=\langle \bold A \by,\bold z\rangle_{\R^n}.$$
By inserting $\by=\iota_\X^{-1}(y)$ and similarly for $\bold z$ we see that $$\H(y,z)=\langle \bold A \iota_\X^{-1}(y), \iota_\X^{-1}(z)\rangle_{\R^n}=\langle \iota_\X\big(\bold A \iota_\X^{-1}(y)\big), z\rangle,$$ where we made use of the fact that $(\iota_\X^{-1})^*=\iota_\X$, which holds for all bijective isometries.
The sought operator $H$ is thus given by $H(y)=\iota_\X \big(\bold A \iota_\X^{-1}(x)\big)$, we leave the remaining details to the reader.
\end{proof}

If the symmetric bilinear form is a Hessian bilinear form $\H|_x$ coming from some functional $f$, we will denote the operator in the above theorem by $H|_x$ and call it the \emph{Hessian operator}. In analogy with \eqref{hu8} one easily sees that 
\begin{equation}\label{hu9}
(\nabla^2 (f\circ \iota_\X)|_\bx) \by=\iota_\X^{-1}(H|_x(y))
\end{equation}
holds for any $\bx=\iota_\X^{-1}(x)$ and $\by=\iota_\X^{-1}(y)$, where $\nabla^2 (f\circ \iota_\X)$ denotes the classical Hessian matrix of the function $f\circ \iota_\X$ on $\R^n$ and $(\nabla^2 (f\circ \iota_\X)|_\bx) \by$ denotes matrix vector multiplication. Thus, again, we are not introducing something new but just a framework in which one can avoid introducing bases and work directly in $\X$.

\smallskip\indent{\sc Example (continued).\quad}
We show how to find this operator in our example, for other explicit examples the reader is referred to \cite{carlsson2025efficient} or \cite{troedsson2024gradient,troedsson2024joint}. One of the reasons why such an expression can still be useful is that we can then always solve the equation $z=H(y)$ approximately by some iterative solver, which opens the door to quasi-Newton methods. To find it, we just take the bilinear Hessian and try to isolate one of the variables, say $z$, by moving everything to the right hand side. From \eqref{h7} we have
\begin{equation}
\H|_x(y,z)=  \sum_{\gamma\in \Gamma}\frac{c_\gamma}{2\big(T(x)\big)_\gamma^2} \big(T(y)\big)_\gamma \big(T(z)\big)_\gamma=\left\langle \frac{c}{2T(x)^2}\cdot T(y),T(z)\right\rangle
\end{equation}
where $\frac{c}{2T(x)^2}$ denotes the matrix $\left(\frac{c_\gamma }{2\big(T(x)\big)_\gamma^2}\right)_{\gamma\in \Gamma}$ and $\cdot$ stands for Hadamard multiplication. Moving $T$ to the other side, Theorem \ref{t2} shows that
\begin{equation}\label{Hessianop}
H|_x(y)=  T^*\left(\frac{c}{2T(x)^2}\cdot T(y)\right)
\end{equation}
is the abstract counterpart of the Hessian matrix.
Finally, we remark that the complexity for computing the bilinear Hessian and Hessian operator  \eqref{h7} and  \eqref{Hessianop} for $n=N^2$ datapoints is \emph{optimal} in the sense that it is identical to the complexity of the objective functional evaluation, i.e., essentially the forward operator used in this model, which if implemented with FFT equals $\mathcal{O}(n\log(n))$. 
The use of the Hessian in matrix form would, in contrast, require the computation and storage of $n^2$ elements, while its use (i.e., application to a vector), would also require $\mathcal{O}(n^2)$ operations.\hfill$\diamond$

\smallskip

\begin{algorithm}
\floatname{algorithm}{Procedure}
\caption{{Computation of Gradient, bilinear Hessian and Hessian operator of a function \( f(x) \) in \(\X\)}} \label{alg:MyAlgorithm}
\begin{algorithmic}

\State \textbf{INPUT:} Expression for function \( f(x),\;x  \in \mathbb{X} \)

\State \textbf{OUTPUT:} Expressions for
\begin{itemize}
\item[-]  \( \nabla f(x) \) (gradient)
\item[-]  \( \mathcal{H}|_x(\Delta y, \Delta z) \) (bilinear Hessian)
\item[-]  \( {H}|_x(\Delta y) \) (Hessian operator)\vspace{1mm}
\end{itemize}

\State \textbf{Step 1:} Perform a second-order Taylor expansion to approximate \( f(x + \Delta x) \).

\State \textbf{Step 2:} Separate the terms into:
\begin{itemize}
    \item the terms involving single \( \Delta x_\gamma \)'s: \( df|_x(\Delta x) \),
    \item the terms involving products \( \Delta x_{\gamma_1} \Delta x_{\gamma_2} \): \( \frac{1}{2} d^2 f|_x(\Delta x, \Delta x) \), so that 
\end{itemize}

\[
f(x + \Delta x) = f(x) + df|_x(\Delta x) + \frac{1}{2} d^2 f|_x(\Delta x, \Delta x) + O(|\Delta x|^3).
\]

\State \textbf{Step 3:} Identify an element \( v \in \mathbb{X} \) such that:
\[
df|_x(\Delta x) = \langle v, \Delta x \rangle_\mathbb{R};
\]
this element \( v \) is the \textit{gradient} \( \nabla f(x) \).

\State \textbf{Step 4:} Use polarization \eqref{polarization} to find the symmetric bilinear form \( \mathcal{H}|_x(\Delta y, \Delta z) \) such that:
\[
d^2 f|_x(\Delta x, \Delta x) = \mathcal{H}|_x(\Delta x, \Delta x);
\]
this form is the \textit{bilinear Hessian} \( \mathcal{H}|_x \).

\State \textbf{Step 5:} Find an element \( w(\Delta y) \in \mathbb{X} \) such that:
\[
\mathcal{H}|_x(\Delta y, \Delta z) = \langle w(\Delta y), \Delta z \rangle_\mathbb{R}
\]
This element \( w(\Delta y) \) is the \textit{Hessian operator} \( {H}|_x \) evaluated at $\Delta y$.

\end{algorithmic}
\end{algorithm}

\subsection{Approximate formulas and Automatic Differentiation}\label{secAD}

An alternative formula for computing the Hessian operator is to note that $H|_x(y)=\frac{d}{dt} \nabla f|_{x+ty},$ and along the same lines the bilinear Hessian can be expressed as 
\begin{equation}
\label{hess:ad}
\H(y,z)=\frac{d}{dt}\langle \nabla f|_{x+ty},z\rangle.
\end{equation}
To see this, note that by \eqref{hu8} and \eqref{hu9} we have, with $x=\iota_\X(\bx)$ and $y=\iota_\X(\by)$, that
\begin{align*}
&\iota_\X^{-1}\left(\frac{d}{dt} \nabla f|_{x+ty}\right)=
\frac{d}{dt} \left(\iota_\X^{-1}(\nabla f)|_{x+t y}\right)=\\&\frac{d}{dt} \left(\nabla (f\circ \iota_\X)|_{\bx+t \by}\right)= (\nabla^2 (f\circ \iota_\X)|_{\bx}) \by=\iota_\X^{-1}(H|_x(y))
\end{align*}
where $\nabla (f\circ \iota_\X)|_{\bx}$ and $\nabla^2 (f\circ \iota_\X)|_{\bx}$ denotes the classical gradient and Hessian matrix of the functional $f\circ \iota_\X$ on $\R^n$.

It is possible to compute both relying on Automatic Differentiation, a technique popular in machine learning for computing gradients without needing to derive explicit formulas. 
This was observed in \cite{pearlmutter1994fast} where it was used to provide a matrix free algorithm to compute the Hessian operator in the context of neural networks, and the idea has slowly spread to other fields, for example it was introduced to ptychography in \cite{kandel2021efficient}. 
Note that, upon evaluating $\frac{d}{dt}$ approximately with a finite difference, the above formulas provide fast alternatives to both AD and the analytic computations suggested in this paper, at the cost of numerical precision \cite{jerrell1997automatic}. This has also been tried in machine learning and it is noted in \cite{pearlmutter1994fast} that, while it sometimes works well, it is flawed by stability issues and also scales poorly when applied to large problems \cite{baydin2018automatic}. It is hard to objectively compare the pros and cons of manual computation, as suggested in this paper, versus AD. Clearly manual computation is more elegant and also easier to implement, once the math has been properly done, but at the end of the day computational speed is what matters, and here it gets hard to compare since it depends very much on which package is used for AD. According to the numerical tests in \cite{srajer2018benchmark} manual computation is the fastest and quite often by a large margin, see Table 2-4 along with a number of interesting runtime graphs. This is likely due to, at least in part, the computational overhead caused by AD dynamically constructing and traversing computational graphs \cite{baydin2018automatic,margossian2019review}. On the other hand, as noted in the introduction of \cite{baydin2018automatic}, ``manual computation is slow and prone to errors''. While this is certainly true, we have found that plotting graphs as in Figure \ref{fig:steprule} is an excellent tool for finding errors; any erroneous formula will immediately produce approximations that look off to the naked eye. Also, manual computing is slow once, AD is slower every time the algorithm is run. 

In the concrete case of the simple operator \eqref{Hessianop}, we implemented an AD version of it and found that using the formula \eqref{Hessianop} was roughly 3 to 6 times faster. For the tests, we considered large data volumes, with sizes ranging from $N=256$ to 8192 in each dimension. Formula \eqref{Hessianop} and its AD version were implemented using PyTorch, leveraging both CPU and GPU (CUDA) backends. In more complicated scenarios we have observed that the computational gain is even larger, which is also supported by the more extensive testing in the above references.

\section{Optimization methods}\label{sec_lg}

Let $f$ be a $C^2$-smooth function, and assume that we are interested in finding a local minimum $\hat x$, given an initial guess $x_0$ which, given a priori information, is not too far from $\hat x$. In this section we describe how to do Gradient Decent, Conjugate Gradient Descent and a Quasi-Newton method making use of the bilinear Hessian and the Hessian operator. 

Let $x_k\in \X$ denote an iterate in any of the above mentioned methods. The common denominator is that we will use the approximation 
\begin{equation}\label{Fapp}
  f(x_k+y)\approx f(x_k)+\Re \langle \nabla f|_{x_k},y\rangle +\frac{1}{2}\H|_{x_k}(y,y)
\end{equation} 
to compute the next iterate $x_{k+1}$. Note that \eqref{Fapp} is simply an abstraction of \eqref{secprx}.

\subsection{Gradient Descent and step size in first order methods}\label{secGD}

Given a certain descent direction $s_k$, we see that a near optimal strategy for step length for the step $x_{k+1}=x_k+\alpha_k s_k$ is to set $y=\alpha s_k $ in the right hand side of \eqref{Fapp} and minimize with respect to $\alpha$, which gives 
\begin{equation}\label{eqalpha}
               \alpha_k=-\frac{\langle \nabla f|_{x_k},s_k\rangle}{\H|_{x_k}( {s_k},s_k)}.
             \end{equation}
In particular, if $s_k=-\nabla f|_{x_k}$, we obtain the Gradient Descent method with Newton step-size;
\begin{equation}\label{GD}
  x_{k+1}=x_k-\frac{ \|\nabla f|_{x_k}\|^2}{\H|_{x_k}(\nabla f|_{x_k},\nabla f|_{x_k})}\nabla f|_{x_k}.
\end{equation}
Of course, the rule \eqref{eqalpha} is only applicable given that 
\begin{itemize}
\item[$i)$] The approximation \eqref{Fapp} is sufficiently accurate in a neighborhood of radius $|\alpha_k|\|s_k\|$ of $x_k$.
\item[$ii)$] The local geometry is convex, or at least that $\H|_{x_k}( {s_k},s_k)>0$.
\end{itemize}
If either of the above two assumptions fails, it is safer to perform a line-search, until one is close enough to a local minimum that the two conditions apply. In Section \ref{convGD} we prove that the above incarnation of Gradient Descent, henceforth denoted BH-GD where BH stands for "bilinear Hessian", does converge to a local minimum given that the initial point is close enough.

Fig.~\ref{fig:steprule} illustrates the quadratic approximation used in the step length computation, for one concrete realization of $T$ and $c$ in our main example from Section \ref{sec:poisson1}, and for iterations $1,3,6$ for the Gradient descent, as introduced below, applied to this realization. It can be observed that the approximation is quite accurate, and increasingly so as the iterations get closer to the minimum. We refer to Section \ref{secnum} for further details on the numerical experiment.

\begin{figure}
\centering
\includegraphics[width=0.33\linewidth]{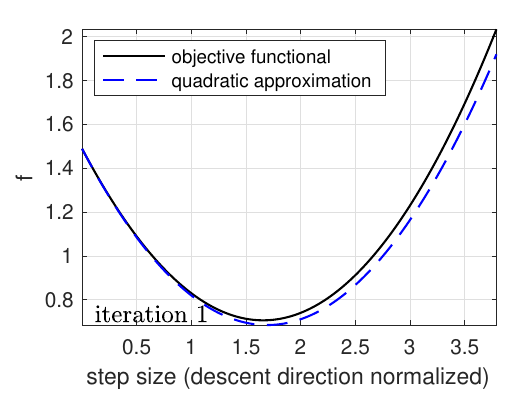}%
\includegraphics[width=0.33\linewidth]{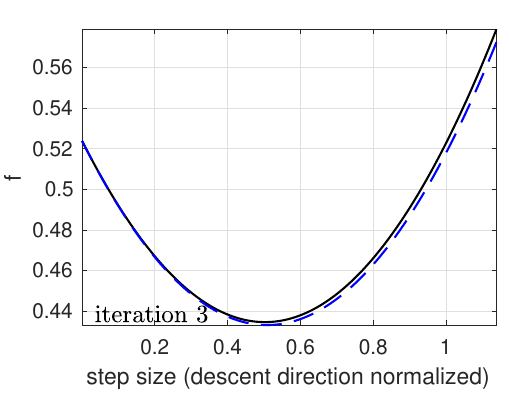}%
\includegraphics[width=0.33\linewidth]{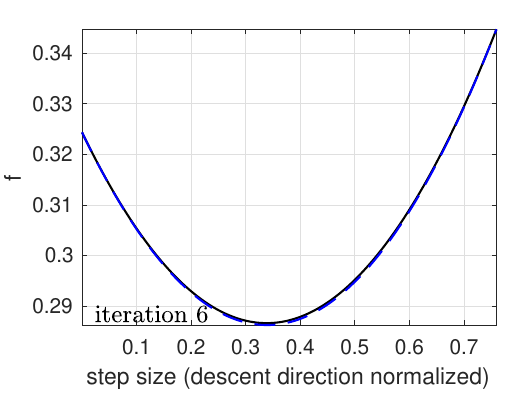}%
\caption{\label{fig:steprule}Illustration of step length computation using bilinear Hessian (BH-GD) for the example in Section \ref{sec:poisson1}.}
\end{figure}

\subsection{The Conjugate Gradient method}\label{seccg}

The original CG was developed by Hestenes and Stiefel \cite{hestenes1952methods} as a fast solver to equations of the form $\HH \bold x=\bold d$, where $\HH$ is a positive definite matrix on $\R^n$ and $\bold d$ represents some measured data, known to converge in only $n$ steps (ignoring numerical errors). Alternatively, in the real case at least, the method can be seen as a descent algorithm seeking to minimize the functional
\begin{equation}\label{e1}
f(\bx)=\frac{1}{2}\langle \bx, \HH \bx\rangle-\langle \bx,\bold d\rangle.
\end{equation}
The key idea is to pick search directions
\begin{equation}\label{e3}
\bold{s}_{k+1}=-\nabla f|_{\bx_{k+1}}+\beta_k \bold{s}_{k},
\end{equation}
where $\beta_k$ is chosen so that $\langle \bold{s}_{k+1}, \HH \bold{s}_{k}\rangle =0$, leading to the formula 
\begin{equation}\label{eqbetaD}
\beta_k=\frac{\langle \nabla f|_{\bx_{k+1}},\HH \bold{s}_{k}\rangle}{\langle \bold{s}_{k},\HH\bold{s}_{k}\rangle}.
\end{equation}
Returning to the case of a possibly non-quadratic $C^2$-functional $f$ on a general linear space $\X$ as in Assumption \ref{def1}, the formula for $\beta_k$ can be rewritten in a myriad of forms. The most famous ones are Fletcher-Reeves \cite{fletcher1964function}, Polak-Ribi\`{e}re \cite{polak1969note}, Hestenes-Stiefel \cite{hestenes1952methods}, dating back to the 50-60's, and more recently Dai-Yuan \cite{dai1999nonlinear} and Hager-Zhang \cite{hager2005new}. For example, the Fletcher-Reeves formula suggests to pick $\beta_k$ via $\frac{ \|\nabla f|_{\bx_{k+1}} \|^2}{ \|\nabla f|_{\bx_{k}} \|^2},$ which remarkably is identical with \eqref{eqbetaD} in the quadratic case. 
We refer to \cite{hager2006survey} for an overview and a more exhaustive list.

Each formula for $\beta_k$ is different and gives rise to a new incarnation of CG that usually bears the name of the inventors, and the jury is still out on which version is the best. The formula \eqref{eqbetaD} suggests the following general rule
for new search direction $s_{k+1}=-\nabla f|_{x_{k+1}}+\beta_k s_k$ by setting
\begin{equation}\label{eqbeta}
                          \beta_k=\frac{\H|_{x_{k+1}}(\nabla f|_{x_{k+1}},s_k)}{\H|_{x_{k+1}}( {s_k},s_k)}
\end{equation}
where $\H|_{x}$ as before denotes the bilinear Hessian of $f$ at $x$. This has been proposed by Daniel \cite{daniel1967conjugate} in 1967, and local convergence of the method in combination with the step-size rule \eqref{eqalpha} was claimed in \cite{daniel1967convergence},
 but the method has not gotten any traction since the Hessian is considered unnecessarily slow to evaluate, as discussed in the introduction. 
However, based on the developments in this paper, we see that Daniel's method can be implemented with a computational cost comparable with the other methods. The resulting method, in which the bilinear Hessian is used for both the $\alpha$ and $\beta$ parameters, will henceforth be denoted BH-CG. 

We remark that it is generally recommended to reinitialize the conjugate gradient direction at least when reaching the dimension of the underlying space, see e.g.~\cite{nesterov2013introductory}, whereas \cite{nazareth2009conjugate} goes one step further and claims that it is numerically advantageous to restart anytime the conjugate gradient direction starts to show ``unwanted'' behavior, such as not being a descent direction. We have seen this type of behavior when running BH-CG on intricate problems and/or when the initial guess is poor, so this is certainly something to consider when evaluating BH-CG. 

\subsection{Quasi-Newton methods}\label{secqn}

We finally consider how to use the Hessian operator for Newton and quasi-Newton methods. Upon differentiating the right hand side of \eqref{Fapp} we get $\nabla f|_{x_k}+H|_{x_k}(y)$ which equals zero whenever \begin{equation}\label{QNeq1}
                       y=-H|_{x_k}^{-1}(\nabla f|_{x_k}),
\end{equation}
given that $H|_{x_k}^{-1}$ exists. Of course, whether or not $H|_{x_k}^{-1}$ exists is, for the applications we have in mind, untractable to compute. However, assuming it is, we can still solve this equation using CG, as outlined in (\ref{e1} - \ref{eqbetaD}), on the functional
\begin{equation}\label{g5} g(y)=\frac{1}{2}\langle H|_{x_k}(y),y\rangle+\langle \nabla f|_{x_k},y\rangle=\frac{1}{2}\H|_{x_k}(y,y)+\langle \nabla f|_{x_k},y\rangle.\end{equation}
Note that the Hessian operator is needed in order to compute the gradient of $g$, via Theorem \ref{t2}.
Since the bilinear Hessian is a quadratic form, given any concrete basis, we know that this will terminate in a finite number of steps. The resulting method will be called BH-N. More precisely, it will terminate in the same amount of steps as there are dimensions in the underlying space $\X$, which for most applications is way to high a number. If we instead terminate the CG method after a much smaller number of steps, we get a new search direction $s_{k+1}$ which approximately solves \eqref{QNeq1}, where of course one can play with a multitude of stopping criteria for the inner loop. We refer to any such method as a Quasi-Newton method. The new point $x_{k+1}$ in the outer loop is then computed via $x_{k+1}=x_k+\alpha_k s_k$ as in Section \ref{secGD}. We call the resulting algorithm BH-QN. It is known to converge locally under favorable assumptions, see Section \ref{secloc}, but it is also known to be unstable, wherefore more stable variants like Gauss-Newton and Levenberg-Marquardt are popular as well. Of course these can also be easily implemented using the Hessian operator, but we do not take such into account in the present exposition.

\section{Numerical illustration}\label{secnum}

\begin{figure}
\centering
\includegraphics[width=0.49\linewidth]{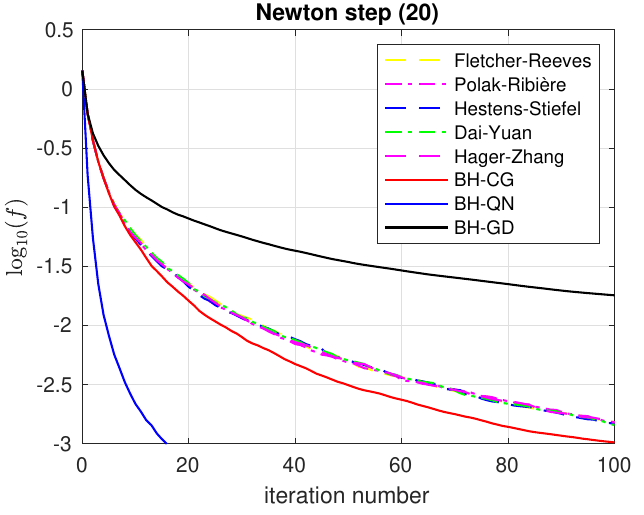}%
\includegraphics[width=0.49\linewidth]{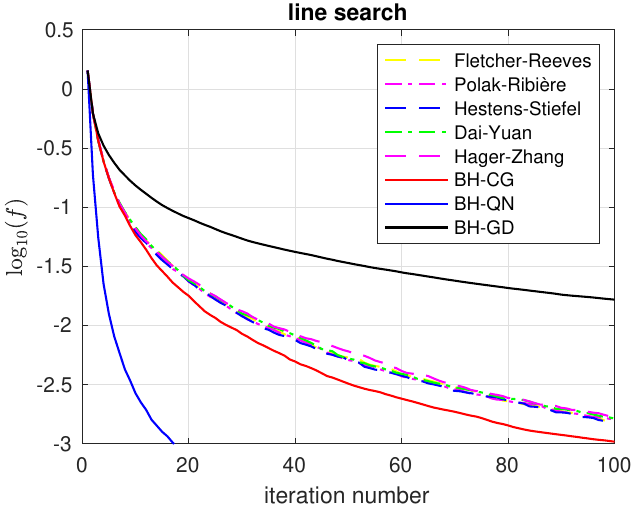}\\
\includegraphics[width=0.49\linewidth]{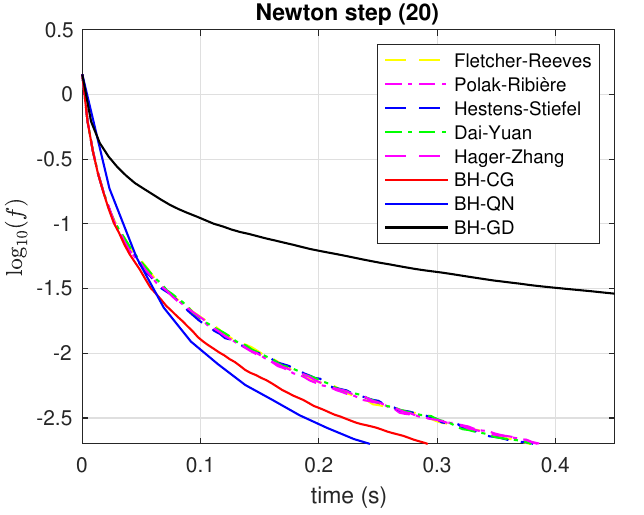}%
\includegraphics[width=0.49\linewidth]{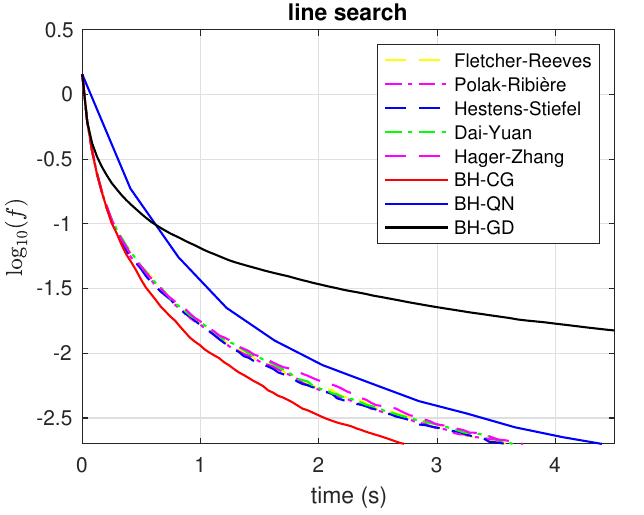}%
\caption{\label{fig:performance}Objective functional value vs. iteration number (top row) and vs. computation time (bottom row), respectively, for step length determined by quadratic approximation using the bilinear Hessian (left column) and by line search (right column, 50 objective function evaluations), respectively.}
\end{figure}

In this section we consider a concrete instance of the main example from Section \ref{sec:poisson1}. We set $N=100$, draw independent uniformly distributed rate parameters $(\bx)_i\sim \mathcal{U}([1,5])$, and let $T$ be a convolution operator (Gaussian blur) implemented via FFT. Performance results reported below are averages (median values) obtained for $100$ independent realizations for $\bx$.

Fig.~\ref{fig:steprule} illustrates the quadratic approximation used in the step length computation, for one single realization, and for iterations $1,3,6$ of the Gradient method as introduced in Section \ref{secGD}. 
Fig.~\ref{fig:performance} reports median objective functional values (in $\log_{10}$ scale) for five different standard CG rules (Fletcher-Reeves, Polak-Ribi\`ere, Hestens-Stiefel, Dai-Yuan, Hager-Zhang), along with Daniel's method using the bilinear Hessian (denoted BH-CG), gradient descent (BH-GD), and quasi-Newton using $12$ inner iterations (BH-QN). All algorithms make use of the same step length rule: proposed (left column), line search (center column) and an overlay thereof (right column); line search is performed for $50$ points logarithmically spaced in the interval $(0.1 \alpha, 3.3\alpha)$, where $\alpha$ is the value given by our step length rule. Results are plotted as a function of iteration number (top row) and computation time (bottom row), respectively, and lead to the following conclusions.
First, gradient descent has slowest convergence (both in terms of iterations and time) and will not be further discussed. 
Second, quasi-Newton yields best performance for this problem and converges in roughly $\sim 1/8$ of the iterations and $\sim 2/3$ of the time needed by the classical CG rules.
Third, the classical CG rules yield similar performance, but are outperformed by a marging of about 30\% (in iteration number and time) by the proposed BH-CG method. Moreover note that restart is needed for the classical CG methods ($\beta$ is set to zero whenever $\bold{s}_{k+1}$ would not be a descent direction) to yield this performance level; without restart, some of those methods are even found to diverge for this problem. In contrast, no restart is used or needed for BH-CG, indicating increased stability and robustness of our proposed method for computing $\beta$.

Finally, the proposed step length rule is highly effective: in terms of iteration numbers, it yields an almost identical performance as line search; in terms of computation time, it is about $10$ times faster than line search (with $50$ objective functional evaluations).

\section{Local convergence results}\label{secloc}

It is easy to construct examples where the algorithms presented in Section \ref{sec_lg} will diverge if initialized too far from a local minimum. The only thing that puts our Gradient Descent apart from the standard incarnation is the step length, and it is conceivable that a proof can be constructed along the lines described in Section 1.2.3 of \cite{nesterov2013introductory}. For completeness, we do the details below. Section 1.2.4 of the same book then proves local quadratic convergence for the Newton method, which can be applied directly in our setting since the BH-N does not rely on any step length, and in Section 1.3.2 it is claimed that Conjugate Gradient has ``local $n$-step quadratic convergence'', which means that $\|x_n-\hat x\|^2\leq const\cdot\|x_0-\hat x\|$ given that the initial point $x_0$ is close enough to a local minimum $\hat x$.

However, there is no reference for this claim, and books and overview articles such as \cite{andrei2020nonlinear} and \cite{hager2006survey} do not mention such results. In fact, both these works are mainly concerned with convergence proofs assuming that the step length satisfies certain Wolfe conditions, which leads to the question of whether our formula \eqref{eqalpha} will satisfy this, asymptotically. It turns out that J.W. Daniel has published results on convergence (albeit not quadratic convergence) of various CG incarnations in \cite{daniel1967conjugate}, and in fact that he also considered the precise version BH-CG in \cite{daniel1967convergence} from 1967. However, in \cite{daniel1970correction} (from 1970) a correction to \cite{daniel1967conjugate} appears. Despite this, in 1972, A. L. Cohen \cite{cohen1972rate} points out that the arguments in Daniel's three contributions contain several errors, and that the same goes for prior proof-attempts by Polyak. However, he also points out that \textit{if} Daniel would have considered Conjugate Gradient with ``restart'', which means that after usually $n$ iterations one reinitializes the previous direction to zero, then ``several of his erroneous results would have been correct''. But whether this applies to the results in \cite{daniel1967convergence} remains nonetheless unclear. Also, the main theorem in \cite{cohen1972rate} does not apply to BH-CG since it assumes exact line search, so we have not found a proof that this precise version converges, and we leave this question for future research. That said, based on our numerical tests we are lead to believe that it always converges super-linearly when initialized sufficiently close to a local minimum.

Concerning Newton's method BH-N, as presented in \ref{secqn}, this does not rely on any step length and therefore coincides exactly with the classical incarnation. There is of course an abundance of classical results giving conditions on local convergence of this method, see e.g.~\cite{ball1974note} where it is shown that the method always converges to $\hat x$ for all starting points $x_0$ in a ball around $\hat x$, given the same assumptions on $f$ as in Theorem \ref{t3} below. The same conclusion, but with quadratic convergence rate, is shown in Theorem 1.2.5 of \cite{nesterov2013introductory}, and even faster convergence is demonstrated, under additional assumptions, in \cite{gragg1974optimal}. We therefore refrain from adding any new proof for BH-N.

\subsection{Gradient Descent}\label{convGD}

In this section we prove local convergence of BH-GD, i.e.~the Gradient Descent with Newton step length rule from Section \ref{secGD}. There is an abundance of similar theorems in the literature, given various step length constraints, see for example Theorem 1.2.4 of \cite{nesterov2013introductory}. However, we have not found a theorem which explicitly states local convergence when using the Newton step-size rule, so we include one for completeness. As usual, we let $\X$ be a linear space as in Assumption \ref{def1} and we let $f:\X\rightarrow \R$ be an objective functional. Since $\X$ is finite dimensional, we have that $f$ is strongly convex at a point $x$ if and only if $\H|_x(y,y)>0$ for all $y\neq 0$.

\begin{theorem}\label{t3}
Let $f\in C^2(\X)$ be given, let $\hat x$ be a local minima such that $f$ is strongly convex at $\hat x$. Then there exists a ball $B$ around $\hat x$ such that BH-GD converges (with linear rate) to $\hat x$ for all initial points $x_0\in B$. 
\end{theorem}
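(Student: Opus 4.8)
The plan is to adapt the classical local-convergence argument for gradient descent (in the spirit of Theorem 1.2.4 of \cite{nesterov2013introductory}) to the Newton step-size rule \eqref{eqalpha}, the only nonstandard ingredient being that the step length is governed by the bilinear Hessian at the \emph{current} iterate. First I would exploit continuity: since $f\in C^2$ and $\H|_{\hat x}$ is positive definite, the associated Hessian operator at $\hat x$ has spectrum in some interval $[\mu_0,L_0]$ with $0<\mu_0\le L_0$. By continuity of $x\mapsto\H|_x$ I can then fix a closed ball $B$ around $\hat x$ and constants $0<\mu\le L$ with $\mu\|y\|^2\le\H|_x(y,y)\le L\|y\|^2$ for all $x\in B$, $y\in\X$; shrinking $B$ further, I can make the Hessian \emph{nearly constant}, i.e. $|\H|_x(y,y)-\H|_{x'}(y,y)|\le\varepsilon\|y\|^2$ for all $x,x'\in B$, with $\varepsilon$ a small parameter to be fixed later. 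On $B$ these bounds make $f$ both $\mu$-strongly convex and $L$-smooth, and together with $\nabla f|_{\hat x}=0$ they force $\hat x$ to be the unique minimizer in $B$.

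The heart of the argument is a sufficient-decrease estimate. Applying the one-variable Taylor theorem with Lagrange remainder to $t\mapsto f(x_k-t\alpha_k\nabla f|_{x_k})$ gives
\begin{equation*}
f(x_{k+1})=f(x_k)-\alpha_k\|\nabla f|_{x_k}\|^2+\tfrac12\alpha_k^2\,\H|_{\xi_k}(\nabla f|_{x_k},\nabla f|_{x_k})
\end{equation*}
for some $\xi_k$ on the segment between $x_k$ and $x_{k+1}$. The step length \eqref{eqalpha} is built from $\H|_{x_k}$, not from $\H|_{\xi_k}$, and \emph{this mismatch is the main obstacle}. Writing $a_k=\H|_{x_k}(\nabla f|_{x_k},\nabla f|_{x_k})$ and $b_k=\H|_{\xi_k}(\nabla f|_{x_k},\nabla f|_{x_k})$ and substituting $\alpha_k=\|\nabla f|_{x_k}\|^2/a_k$, the realized decrease equals $\tfrac{\|\nabla f|_{x_k}\|^4}{a_k}\bigl(1-\tfrac{b_k}{2a_k}\bigr)$. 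A naive bound only gives $b_k/a_k\le L/\mu$, which is useless once the condition number exceeds $2$; this is precisely why near-constancy is needed. For $\varepsilon$ small enough, both $a_k$ and $b_k$ lie within $\varepsilon\|\nabla f|_{x_k}\|^2$ of the common reference value $\H|_{\hat x}(\nabla f|_{x_k},\nabla f|_{x_k})$, which is itself between $\mu_0\|\nabla f|_{x_k}\|^2$ and $L_0\|\nabla f|_{x_k}\|^2$; hence $b_k/a_k=1+\mathcal{O}(\varepsilon/\mu_0)<2$. Convexity of $B$ guarantees $\xi_k\in B$, and one concludes $f(x_k)-f(x_{k+1})\ge\delta\|\nabla f|_{x_k}\|^2$ for some $\delta>0$.

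Finally I would convert sufficient decrease into linear convergence. Local $\mu$-strong convexity yields the Polyak--{\L}ojasiewicz inequality $\|\nabla f|_{x_k}\|^2\ge 2\mu\bigl(f(x_k)-f(\hat x)\bigr)$, so that $f(x_{k+1})-f(\hat x)\le(1-2\delta\mu)\bigl(f(x_k)-f(\hat x)\bigr)$ with $0<2\delta\mu<1$. To make this rigorous I would run an induction keeping all iterates in $B$: assuming $x_0,\dots,x_k\in B$, the monotone decrease together with the sandwich $\tfrac{\mu}{2}\|x-\hat x\|^2\le f(x)-f(\hat x)\le\tfrac{L}{2}\|x-\hat x\|^2$ gives $\|x_{k+1}-\hat x\|^2\le\tfrac{L}{\mu}\|x_0-\hat x\|^2$, so choosing the initial ball $B'$ of radius $r\sqrt{\mu/L}$ (with $r$ the radius of $B$) keeps $x_{k+1}\in B$ and closes the induction. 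The same sandwich transfers the geometric decay of $f(x_k)-f(\hat x)$ to geometric decay of $\|x_k-\hat x\|^2$, yielding the claimed linear convergence. Since every quantity is phrased through the real inner product $\langle\cdot,\cdot\rangle_\R$, the complex case of Assumption \ref{def1} is covered without change.
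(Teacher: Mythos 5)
Your overall route is sound and genuinely different from the paper's. The paper works in the norm $\|x\|_{\hat H}=\sqrt{\H|_{\hat x}(x,x)}$ induced by the Hessian at the minimizer and proves a direct contraction $\|x_{k+1}-\hat x\|_{\hat H}\le\theta\|x_k-\hat x\|_{\hat H}$, with all errors controlled by an operator-norm bound $\|H|_x-H|_{\hat x}\|<\epsilon$ (its Proposition \ref{pp1}). You instead run the sufficient-decrease/Polyak--{\L}ojasiewicz machinery on function values and transfer to distances via the strong-convexity sandwich. Your central estimate --- writing the realized decrease as $\frac{\|\nabla f|_{x_k}\|^4}{a_k}\bigl(1-\frac{b_k}{2a_k}\bigr)$ and using near-constancy of the Hessian to force $b_k/a_k$ close to $1$ --- is correct, and it isolates the same key point as the paper: a naive bound $b_k/a_k\le L/\mu$ is useless, so the Hessian at the Lagrange point $\xi_k$ must be compared to the Hessian at $x_k$ rather than to worst-case spectral bounds. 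Each approach buys something: yours yields the rate in terms of $f$-values with off-the-shelf PL arguments; the paper's yields contraction of the iterates themselves, in the geometry adapted to $\H|_{\hat x}$.

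However, your induction keeping the iterates in $B$ is circular as written. To conclude $x_{k+1}\in B$ you invoke the monotone decrease $f(x_{k+1})\le f(x_k)$ and the lower sandwich $\frac{\mu}{2}\|x_{k+1}-\hat x\|^2\le f(x_{k+1})-f(\hat x)$; but the decrease estimate at step $k$ requires $\xi_k\in B$ (your appeal to ``convexity of $B$'' presupposes $x_{k+1}\in B$), and the lower sandwich at $x_{k+1}$ likewise requires the segment from $\hat x$ to $x_{k+1}$ to lie in the region where the Hessian bounds hold. Both facts are exactly what the induction step is trying to establish. The missing ingredient is an a priori displacement bound that uses only data at $x_k$: since $x_k\in B$ one has $\alpha_k\le 1/\mu$ and $\|\nabla f|_{x_k}\|\le L\|x_k-\hat x\|$, hence $\|x_{k+1}-\hat x\|\le(1+L/\mu)\|x_k-\hat x\|$. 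Confining the iterates to an inner ball of radius $r/(1+L/\mu)$ then guarantees that the whole segment $[x_k,x_{k+1}]$ lies in $B$ \emph{before} Taylor's theorem is applied, and your function-value argument keeps the iterates in that inner ball provided $x_0$ starts in a yet smaller ball (shrunk by the further factor $\sqrt{\mu/L}$). This is precisely the role of the paper's preliminary estimate $\|x+\alpha s_x\|_{\hat H}\le(1+\frac{M+\epsilon}{m})\|x\|_{\hat H}$ and its choice $R_0=\delta/(1+\frac{M^2+\epsilon}{m})$, which ensure its Proposition \ref{pp1} applies along the entire step. With this repair your proof goes through.
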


The proof of Theorem \ref{t3} will rely on a number of lemmas. First of all note that by considering $x\mapsto f(x-\hat x)-f(\hat x)$, we see that there is no restriction to assume that $\hat x=0$ and that $f(0)=0$. Secondly, upon introducing a basis as in \eqref{iota} and identifying $\X$ with $\R^n$, we may as well work with $\iota_\X^{-1} f\iota_\X$ (see the proof of Theorem \ref{t1} for details). Thus, we shall assume that $\X=\R^n$ to begin with.
Thirdly, we introduce the notation $\hat{H}:= H|_0$ for the Hessian of $f$ at 0, since this will play a special role in the proof.
Since we have assumed that $f$ is strongly convex at the minimum, it follows that $$\|x\|_{\hat{H}}:= \sqrt{\hat{\H}(x,x)}$$ defines a norm on $\R^M$ which we will refer to as the $\hat{H}$-norm. 
The general idea of the proof is to show that \begin{equation}\label{theta}
                                                \|x_{k+1}\|_{\hat{H}}\leq \theta \|x_k\|_{\hat{H}}
\end{equation} for some $\theta<1$, given that $\|x_k\|_{\hat{H}}$ is less than some given number $R_0$. Since all norms on finite-dimensional spaces are comparable, this immediately entails that $\lim_{k\rightarrow\infty} x_k=0$.

By continuity of the second derivatives (and again the fact that all norms on finite dimensional spaces are comparable), we can pick $\delta>0$ such that the operator norm of $H|_{x}-\hat{H}$ is less than $\epsilon$ whenever $\|x\|_{\hat{H}}<\delta$, \textit{where the operator norm can refer either to the one induced by the Euclidean norm or the $\hat{H}$-norm}. We assume in the remainder that $\epsilon$ and $\delta$ are such that this holds, where the concrete value of $\epsilon$ will be fixed later.

\begin{proposition}\label{pp1}
We have $\Big|f(x)-\|x\|_{\hat{H}}^2\Big|\leq \epsilon \min(\|x\|^2,\|x\|_{\hat{H}}^2)$ and $$\Big|\H|_x(y,y)-\|y\|_{\hat{H}}^2\Big|\leq \epsilon\min(\|y\|^2,\|y\|_{\hat{H}}^2)$$ whenever $\|x\|_{\hat{H}}<\delta$. Moreover $\|\nabla f|_{x}-\hat{H} x\|_{\hat{H}}\leq \epsilon \|x\|_{\hat{H}}.$
\end{proposition}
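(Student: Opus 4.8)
The three inequalities are all perturbation bounds driven by the single hypothesis fixed just before the statement, namely that the operator norm of $H|_x-\hat H$ is less than $\epsilon$ whenever $\|x\|_{\hat H}<\delta$. The common device is to integrate the first and second derivatives of $f$ along the straight segment $t\mapsto tx$, $t\in[0,1]$, which stays inside the ball $\{\|\cdot\|_{\hat H}<\delta\}$ since $\|tx\|_{\hat H}=t\|x\|_{\hat H}\le\|x\|_{\hat H}$. My plan is to dispatch the gradient bound first, then the bilinear-form bound, and finally to feed the latter into a Taylor remainder to obtain the bound on $f$ itself; I treat them in this order because the last estimate depends on the middle one.

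\textbf{Gradient bound.} Since $\nabla f|_0=\hat H\cdot 0=0$ and $\tfrac{d}{dt}\nabla f|_{tx}=H|_{tx}(x)$, the fundamental theorem of calculus gives $\nabla f|_x-\hat Hx=\int_0^1\big(H|_{tx}-\hat H\big)(x)\,dt$. Taking $\hat H$-norms under the integral and using that $\|(H|_{tx}-\hat H)(x)\|_{\hat H}\le\epsilon\|x\|_{\hat H}$ for every $t\in[0,1]$ (the hypothesis, in the $\hat H$-induced operator norm, applies because $\|tx\|_{\hat H}<\delta$) yields $\|\nabla f|_x-\hat Hx\|_{\hat H}\le\epsilon\|x\|_{\hat H}$ immediately.

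\textbf{Bilinear Hessian bound.} Here I would collapse the difference into a single pairing, $\H|_x(y,y)-\|y\|_{\hat H}^2=\langle(H|_x-\hat H)(y),y\rangle$, using $\H|_x(y,y)=\langle H|_x(y),y\rangle$ (Theorem \ref{t2}) and $\|y\|_{\hat H}^2=\langle\hat Hy,y\rangle$. Cauchy--Schwarz in the Euclidean inner product together with the Euclidean operator-norm bound produces the term $\epsilon\|y\|^2$, while rewriting the same pairing as $\langle\hat H^{-1}(H|_x-\hat H)(y),y\rangle_{\hat H}$, with $\langle u,v\rangle_{\hat H}:=\langle\hat Hu,v\rangle$, and applying Cauchy--Schwarz in the $\hat H$ inner product produces $\epsilon\|y\|_{\hat H}^2$; taking the minimum gives the claim. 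I expect this to be the main obstacle: one must check that the two operator-norm readings announced before the statement really do control these two pairings. The subtlety is that $H|_x-\hat H$ is self-adjoint for the Euclidean structure but not for the $\hat H$ structure, so the $\hat H$-side bound is best understood as a Loewner/congruence estimate $-\epsilon\hat H\preceq H|_x-\hat H\preceq\epsilon\hat H$ rather than as a naive norm inequality.

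\textbf{Bound on $f$.} Finally, with $f(0)=0$ and $\nabla f|_0=0$, the second-order Taylor formula with integral remainder reads $f(x)=\int_0^1(1-t)\,\H|_{tx}(x,x)\,dt$. Subtracting the reference value obtained by replacing $\H|_{tx}(x,x)$ with $\|x\|_{\hat H}^2$, and invoking the bilinear Hessian bound pointwise in $t$ (legitimate since $\|tx\|_{\hat H}<\delta$), transfers the estimate $\epsilon\min(\|x\|^2,\|x\|_{\hat H}^2)$ from the integrand to the integral. The only bookkeeping to watch closely is the remainder constant $\int_0^1(1-t)\,dt=\tfrac12$, which governs the normalization of the leading quadratic term and hence the precise constant appearing in this first inequality.
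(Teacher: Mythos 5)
Your proposal follows essentially the same route as the paper's own proof: all three estimates are reduced to the smallness of $H|_x-\hat H$ on the ball $\{\|\cdot\|_{\hat H}<\delta\}$, the quadratic-form bound rests on the identity $\H|_x(y,y)-\|y\|_{\hat H}^2=\langle (H|_x-\hat H)y,y\rangle$, the gradient bound on integrating $\tfrac{d}{dt}\nabla f|_{tx}=H|_{tx}(x)$ along the segment (note that the identity $\nabla f|_0=0$ you invoke holds because $0$ is a critical point, not by ``evaluating $\hat H$ at $0$''), and the bound on $f$ on a Taylor remainder controlled pointwise by the quadratic-form bound. Where you deviate, you are more careful than the printed proof. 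For the $\hat H$-side of the quadratic-form bound the paper merely asserts that ``the same computation holds with $\|y\|_{\hat H}$ in place of $\|y\|$''; since the pairing $\langle (H|_x-\hat H)y,y\rangle$ is a Euclidean pairing and $H|_x-\hat H$ is not $\hat H$-self-adjoint, that shortcut is not a literal Cauchy--Schwarz application, and your congruence reading $-\epsilon\hat H\preceq H|_x-\hat H\preceq\epsilon\hat H$ (available for small enough $\delta$ by continuity of $x\mapsto H|_x$) is the clean way to justify it.

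More importantly, your remainder bookkeeping exposes a genuine factor-of-two slip in the paper. With the paper's normalization (Theorem~\ref{t1} places the coefficient $\tfrac12$ in front of $d^2f$), Taylor's formula at $0$ with $f(0)=0$ and $\nabla f|_0=0$ gives $f(x)=\tfrac12\H|_\xi(x,x)$, whereas the printed proof writes $f(x)=\langle H_\xi x,x\rangle$, silently dropping the $\tfrac12$. What your computation $f(x)=\int_0^1(1-t)\,\H|_{tx}(x,x)\,dt$ actually yields is $\bigl|f(x)-\tfrac12\|x\|_{\hat H}^2\bigr|\le\tfrac{\epsilon}{2}\min(\|x\|^2,\|x\|_{\hat H}^2)$, and this is the correct statement: the first inequality as printed is false, e.g.\ for the exactly quadratic $f(x)=\tfrac12\langle\hat Hx,x\rangle$ (for which $H|_x\equiv\hat H$, so the hypothesis holds for every $\epsilon>0$) one has $|f(x)-\|x\|_{\hat H}^2|=\tfrac12\|x\|_{\hat H}^2$, which is not bounded by $\epsilon\min(\|x\|^2,\|x\|_{\hat H}^2)$ for small $\epsilon$. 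So do not merely ``watch the constant'': conclude by restating the first inequality with $\tfrac12\|x\|_{\hat H}^2$ in place of $\|x\|_{\hat H}^2$ (equivalently, rescale the norm), which is what your argument proves; the subsequent proof of Theorem~\ref{t3} survives this correction with adjusted constants. With that resolution made explicit, your proof is complete and correct.
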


\begin{proof}
Since $\|y\|_{\hat H}^2=\langle \hat{H} y,y\rangle$ we get $$\sup_{y\neq 0}\frac{|\H|_{x} (y,y) -\|y\|^2_{\hat{H}}|}{\|y\|^2}=\sup_{y\neq 0}\frac{|\langle (H|_{x}-\hat{H}) y,y\rangle|}{\|y\|^2}\leq\sup_{y\neq 0}\frac{ \|H|_{x}-\hat{H}\| \|y\|^2}{\|y\|^2}<\epsilon.$$ The same computation holds with $\|y\|_{\hat{H}}$ in place of $\|y\|$ on the bottom row, which proves the second inequality. By Taylors formula (at 0) we have $f(x)=\langle H_{\xi} x,x\rangle$ for some $\xi$ on the line connecting 0 and $x$, which gives that 
$$f(x)-\|x\|_{\hat H}^2=\langle (H|_{\xi}-\hat{H}) x,x\rangle$$ and thus the first follows in the same way.
Finally, we have $\nabla f(x)=\int_{0}^1 \frac{d}{dt}\nabla f(tx) dt= \int_{0}^1 H|_{tx}(x) dt$. Swapping $H|_{tx}$ for $\hat{H}$ turns this into $\hat{H}x$ so
$$\|\nabla f|_x-H|_0x\|\leq \int_{0}^1 \|(H|_{tx}-\hat{H})(x)\| dt\leq \int_{0}^1 \epsilon \|x\| dt=\epsilon \|x\|,$$ as desired.
\end{proof}

We assume from now on that $\epsilon>0$ has been fixed and that $x$ is such that $\|x\|_{\hat{H}}<\delta$. Note that the Newton step length is given by $$\alpha_x=\frac{\|s_x\|^2}{\H|_x(s_x,s_x)}$$ with $s_x=-\nabla f|_{x}$. Let $M$ and $m$ be such that \begin{equation}\label{eqmM}m\|y\|\leq \|H|_x y\|\leq M\|y\|\end{equation}
holds for all $x$ with $\|x\|_{\hat{H}}<\delta$. By the continuity of eigenvalues of $H|_x$ and the assumption that $f$ is strongly convex at 0 it is clear that we can assume that $m>0$, given that $\delta$ is small enough. Note that 
$$\frac{1}{M}\leq \alpha_x\leq \frac{1}{m}$$
by Proposition \ref{pp1}, and also that \begin{equation}\label{y6}
                                          f(x+\alpha_xs_x)\approx f(x)-\alpha_x \|s_x\|^2+\frac{\alpha_x^2}{2}\H|_x(s_x,s_x)=f(x)-\frac{\alpha_x \|s_x\|^2}{2}.
                                        \end{equation}
Since $\alpha_x$ stays away from 0 we can use this to show that the function-value at $x+\alpha_xs_x$ is always lower than at $x$.

Now for the details, note that for any $\alpha\in [0,\alpha_x]$ we have $$\|x+\alpha s_x\|_{\hat{H}}\leq \|x\|_{\hat{H}}+\alpha \|s_x\|_{\hat{H}}\leq \|x\|_{\hat{H}}+\frac{1}{m}(\|\hat{H}x\|_{\hat{H}}+\epsilon\|x\|_{\hat{H}})\leq (1+\frac{M+\epsilon}{m})\|x\|_{\hat{H}},$$ where we used that $\|\hat{H}x\|_{\hat{H}}^2=\langle \hat{H}^3x,x\rangle\leq M^2\langle \hat{H}x,x\rangle=M^2\|x\|_{\hat{H}}^2$.
We pick $R_0=\delta/(1+\frac{M^2+\epsilon}{m})$ and assume that $\|x\|_{\hat{H}}\leq R_0,$ which thus ensures that Proposition \ref{pp1} applies to $x+\alpha s_x$ for any $\alpha$ as above.
By Proposition \ref{pp1} we then get \begin{equation}\label{y8}
                                   f(x+\alpha_x s_x)\geq \|x+\alpha_x s_x\|_{\hat{H}}^2-\epsilon\|x+\alpha_x s_x\|_{\hat{H}}^2.
                                 \end{equation}
and moreover by Taylor's formula we have
$$f(x+\alpha_xs_x)= f(x)-\alpha_x \|s_x\|^2+\frac{\alpha_x^2}{2}\H|_{x+\alpha s_x}(s_x,s_x)$$ for some $\alpha\in[0,\alpha_x]$ which combined with \eqref{y6} gives that
\begin{align*}
 & f(x+\alpha_xs_x)= f(x)-\frac{\alpha_x \|s_x\|^2}{2}+\frac{\alpha_x^2}{2}|(\H|_{x+\alpha s_x}-\H|_x)(s_x,s_x)|\leq \\
 & \|x\|_{{\hat{H}}}^2+\epsilon\|x\|_{{\hat{H}}}^2 -\frac{\|s_x\|^2}{2M}+\frac{2\epsilon}{2m^2}\|s_x\|^2\leq
 \|x\|_{{\hat{H}}}^2+\epsilon\|x\|_{{\hat{H}}}^2 -\left(\frac{1}{2M}-\frac{\epsilon}{m^2}\right)\|s_x\|^2
 \end{align*}
As in the previous section we may assume that $\epsilon$ is small enough that the final parenthesis is positive, which since \begin{equation}\label{ki8}
\|s_x\|=\|\nabla f|_x\|\geq \frac{\|\nabla f|_x\|_{\hat{H}}}{\sqrt{M}}\geq \frac{\| \hat{H} x\|_{\hat{H}}-\epsilon\|x\|_{\hat{H}}}{\sqrt{M}}\geq \left(\frac{m}{\sqrt{M}}-\frac{\epsilon}{\sqrt{M}}\right)\|x\|_{\hat{H}}
\end{equation} implies that
\begin{align*}
&f(x+\alpha_xs_x)\leq  \left( 1+\epsilon -\left(\frac{1}{2M}-\frac{\epsilon}{m^2}\right)\left(\frac{m}{\sqrt{M}}-\frac{\epsilon}{\sqrt{M}}\right)^2\right)\|x\|_{\hat{H}}^2 
\end{align*} 
Combined with \eqref{y8} this gives $$\|x+\alpha_x s_x\|_{\hat{H}}^2\leq (1-\epsilon)^{-1}\left( 1+{\epsilon} -\left(\frac{1}{2M}-\frac{\epsilon}{m^2}\right)\left(\frac{m}{\sqrt{M}}-\frac{\epsilon}{\sqrt{M}}\right)^2\right)\|x\|_{\hat{H}}^2.$$
Since for $\epsilon=0$ the constant equals $1-\frac{m^2}{2 M^2}$ we can pick $\epsilon$ such that this is less than some $\theta<1$, as desired.

\section{Conclusion}\label{sec13}

In this work, we introduced a novel framework for computing second-order information in smooth optimization problems, with a particular emphasis on efficiency for large-scale applications. The framework is computationally efficient and easy to implement, offering direct formulas for the gradient and Hessian that can be broadly applied across various optimization tasks. We have provided a detailed explanation of each step and the corresponding proofs, ensuring that the framework is both complete and accessible to researchers from diverse fields. While some of the formula derivations are not new, the method as a whole offers a novel coherent and practical approach. To further facilitate its adoption, we presented a clear, step-by-step procedure for applying the method, which involves Taylor expansions, term reordering, and polarization.

In our numerical demonstration, the method for evaluating the Hessian in a 2D convolution problem with Poisson noise was 3 to 6 times faster than the one based on automatic differentiation from PyTorch. This speedup was consistent across data sizes ranging from 256×256 to 8192×8192, on both modern CPU and GPU. We also observed similar acceleration when solving other, more complex optimization problems. Furthermore, the method achieved faster convergence and more stable behavior in the Conjugate Gradient method when computing the conjugate direction using our Hessian formulas, compared to classical methods like those from Dai-Yuan and Polak-Ribiere, with the step size determined using our approach. Additionally, we showed that our method enabled efficient evaluation of the Hessian operator for Quasi-Newton methods, resulting in faster convergence. Although Quasi-Newton methods generally offer faster convergence, they tend to be less stable than Conjugate Gradient methods. Therefore, the choice of method should be based on the specific needs of the application, with testing recommended to identify the most suitable approach.


Our numerical experiments highlighted the potential of this method for large-scale applications, such as X-ray ptychography~\cite{kandel2021efficient} and holotomography~\cite{nikitin2024x,nikitin2025single}, where the problems often involve on the order of $10^8$ variables. We plan to study this method for such problems in future work. Notably, the proposed framework offers a clear path to solving these highly challenging, contemporary problems with reduced computational cost and faster convergence, while convergence guarantees further enhance its practicality.

Looking further ahead, this framework holds significant promise for a wide range of applications beyond traditional optimization tasks. Its adaptability to machine learning makes it a strong candidate for accelerating optimization in deep learning and other ML models. Moreover, the method’s potential for automation, through the automated application of Taylor expansions and polarization, opens up exciting possibilities for fully automated optimization pipelines. We envision it as a viable alternative to widely used automatic differentiation methods in ML applications. Additionally, we aim to extend the framework to handle more complex objective functions, where the bilinear form approximation could be generalized or improved. Finally, a deeper exploration of the automation process and its integration with existing ML frameworks, such as PyTorch and Tensorflow, will be essential for scaling this approach to even broader applications.

\hfill \break
\noindent\textbf{Acknowledgments} Work in part supported by Swedish Research Council Grant 2022-04917.
This research used resources of the Advanced Photon Source, a U.S. Department of Energy (DOE) Office of Science user facility at Argonne National Laboratory and is based on research supported by the U.S. DOE Office of Science-Basic Energy Sciences, under Contract No. DE-AC02-06CH11357.

\hfill \break
\noindent\textbf{Data Availability} The data that support the finding of this study are available from the corresponding author upon reasonable request.

\section*{Declarations}
\textbf{Conflict of interest} The authors have no relevant financial or non-financial interests to disclose.

\bibliography{sn-bibliography}

\end{document}